\documentclass[12pt]{aptpub}
\usepackage{color}

\usepackage{amsmath,amstext,url}

\oddsidemargin=-0.8cm \evensidemargin=-0.8cm \textwidth=16.4cm
\textheight=22.50cm \topmargin=-1.0cm

   % insert the authors here for use in running head

   % insert short title here for use in running head

 %uncomment this to get single spacing
\numberwithin{equation}{section} % uncomment this to get equation numbering
                                  % in the form (3.1), (3.2), ... etc

\makeatletter \@addtoreset{equation}{section}

\makeatletter \@addtoreset{lemma}{section}

\makeatletter \@addtoreset{theorem}{section}

\makeatletter \@addtoreset{proposition}{section}

\makeatletter \@addtoreset{corollary}{section}

\makeatletter \@addtoreset{remark}{section}

\makeatletter \@addtoreset{definition}{section}

\makeatletter \@addtoreset{example}{section}

% Some definitions

% ----------------- Begin PKP defs ------------->>

% ----------------- End PKP defs --------------->>

\begin{document}

\thispagestyle{firstpg}

\vspace*{1.5pc} \noindent \normalsize\textbf{\Large {Decay property of a class of $d$-dimensional Markov processes}} \hfill \vspace{0.5cm}

\hspace*{0.75pc}{\small\textrm{\uppercase{Yanyun Li
}}}\hspace{-2pt}$^{*}$ {\small\textit{Central South University}}

%\hspace*{0.75pc}{\small\textrm{\uppercase{Junping Li
%}}}\hspace{-2pt}$^{**}$ {\small\textit{Guangdong University of %Science $\&$ Technology; Central South University }}

%\hspace*{0.75pc}{\small\textrm{\uppercase{Zang Yanchao
%}}}\hspace{-2pt}$^{***}$ {\small\textit{Central South University }}

%\hspace*{0.75pc}{\small\textrm{\uppercase{Hanjun Zhang
%}}}\hspace{-2pt}$^{****}$, {\small\textit{Xiangtan University }}
\par
\footnote{\hspace*{-0.75pc}$^{*}\,$Postal address:
School of Mathematics and Statistics, Central South University, Changsha, 410083, China. E-mail address:\
li.yanyun@csu.edu.cn}

%\par
%\footnote{\hspace*{-0.75pc}$^{**}\,$Postal address:
%Guangdong University of Science $\&$ Technology, Dongguan, %523083, China; Central South University, Changsha, 410083, China. %E-mail address:\
%jpli@mail.csu.edu.cn}

\par
\renewenvironment{abstract}{%
\vspace{2pt} \hspace*{2.25pc}
\begin{minipage}{14cm}
\footnotesize
{\bf Abstract}\\[1ex]
\hspace*{0.5pc}} {\end{minipage}}
\begin{abstract}
In this paper, we consider the decay property of a special class of $d$-dimensional Markov processes, which can be viewed as a stopped network with the external customer being blocked to empty nodes. The exact value of the decay parameter $\lambda_{\mathcal{C}}$ is obtained by using a new method. It is proved that the process is $\lambda_{\mathcal{C}}$-transient. The corresponding
$\lambda_{\mathcal{C}}$-invariant measures and quasi-distributions are also
presented. Finally, an example on auto quick repair service network is presented to illustrate the results obtained in this paper.
\end{abstract}
\par
\vspace*{12pt} \hspace*{2.0pc}
\parbox[b]{26.75pc}{{%\uppercase
}} {\footnotesize {\bf Keywords:} $d$-dimensional Markov process, decay parameter, invariant measures, \\
\hspace*{8.50pc}subinvariant measures,
subinvariant vectors, quasi-distributions\\

}
\par
\normalsize

\renewcommand{\amsprimary}[1]{
     \vspace*{8pt}
     \hspace*{2.25pc}
     \parbox[b]{24.75pc}{\scriptsize
    AMS 2000 Subject Classification: Primary 60J27 Secondary 60J35
     {\uppercase{#1}}}\par\normalsize}
\renewcommand{\ams}[2]{
     \vspace*{8pt}
     \hspace*{2.25pc}
     \parbox[b]{24.75pc}{\scriptsize
     AMS 2000 SUBJECT CLASSIFICATION:\ \ PRIMARY
     {\uppercase{#1}}\\
     \phantom{
     AMS 2000 SUBJECT CLASSIFICATION:}
 SECONDARY
 {\uppercase{#2}}}\par\normalsize}

\ams{60J27}{60J35}
   % insert the primary ams number(s) in the first bracket
  % and the secondary ams number(s) in the second bracket
  %  e.g. \ams{60E20}{49G03;49F10}

\par
\vspace{5mm}
 \setcounter{section}{1}
 \setcounter{equation}{0}
 \setcounter{theorem}{0}
 \setcounter{lemma}{0}
 \setcounter{corollary}{0}
\noindent {\large \bf 1. Introduction}
\vspace{3mm}
\par
The decay property plays an important role in the study of continuous-time and discrete-time Markov chains. The existence of decay parameter was firstly revealed by Kingman~\cite{Kin63} which showed that if $C$ is an irreducible class of a continuous-time Markov chain on the countable state space $\mathbf{E}$, then there exists a nonnegative number $\lambda_C$, called the decay parameter of the corresponding process, such that for all $i,j\in C$,
\begin{eqnarray*}
  \lim_{t\to \infty}t^{-1}\log p_{ij}(t)=-\lambda_C,
\end{eqnarray*}
where $P(t)=(p_{ij}(t):i,j\in \mathbf{E})$ is the transition probability function of the corresponding continuous-time Markov chain. It can be proved that this decay parameter can be expressed as
\begin{eqnarray}\label{eq1.1}
  \lambda_C=\inf \{\lambda \geq 0: \int_0^{\infty}e^{\lambda
  t}p_{ij}(t)dt=\infty\}=\sup\{\lambda \geq 0: \int_0^{\infty}e^{\lambda
  t}p_{ij}(t)dt<\infty\},
\end{eqnarray}
where the latter two quantities in (\ref{eq1.1}) are independent of $i,j\in C$. Based on this
pioneer and remarkable work, there are many important research works regarding decay property, such as Flaspohler~\cite{Fla74}, Pollett~\cite{P86,P88},
Darroch and Seneta~\cite{DS67}, Kelly~\cite{KE83}, Kijima~\cite{KJM63}, Nair and Pollett~\cite{NP93}, Tweedie~\cite{T74}, Van
Doorn~\cite{Van85,Van91}.
\par
The deep relationship between invariant measures and
quasi-stationary distributions has been revealed by Van Doorn~\cite{Van91}, and Nair and Pollett~\cite{NP93}. Although the theory of continuous-time Markov chains has flourished, the decay properties of queuing models have not been fully studied. In fact, for most transient Markov processes, even calculating the exact value of the decay parameter remains open yet.
\par
The main purpose of this paper is to consider the decay property of a special class of $d$-dimensional Markov processes, where $d \geq 2$. Its evolution can be described as follows:
\par
(i)\ The system consists of $d$ service nodes, labeled $1,2,\cdots,d$, where $d<\infty$. For each $r=1,2,\cdots,d$, when the $r$th node is nonempty, the entrance of this node is open and the arrival of external customers (i.e., the customers from out side) to this node follows a compound Poisson process with arrival rates $\lambda_{rl}\ (l\geq 1)$; while when the $r$th node is empty, the external customers are not allowed to enter this node. Therefore, $\lambda_r:=\sum\limits_{l=1}^{\infty}\lambda_{rl}$ is the whole arrival rate of customers at the $r$th node. We assume that $\lambda_r<\infty\ (r=1,2,\cdots,d)$;
\par
(ii)\ There is exactly one sever with service rate $\mu_{_r}$ at node $r$. When a customer at node $r$ has been served, he/she leaves the system with probability $\gamma_{_{r0}}$ or transfer to node $s$ with probability $\gamma_{_{rs}}$ ($r,s=1,\cdots,d$), where $\sum\limits_{s=0}^d\gamma_{_{rs}}=1$ for each $r$;
\par
(iii)\ The arrival of customers and service at each node are all independent of each other;
\par
(iv)\ When the network is empty, it stops.
\par
In general situation, the customers from outside may be allowed to enter an empty node. However, in practical applications, there still exist some practical scenarios that a node does not allow external customers when it is idle. For example, consider an automobile quick repair network with $m$ service stations. The entrance of each station is only open when there are customers at that station. However, in order to alleviate the burden on other stations, each station must accept customers transferred from other stations at all times. Another example is hospital department system, different departments can be regarded as different nodes. When a department is idle, the doctor needs to deal with internal affairs (such as medical records, participating in internal meetings, etc.), and then the department does not receive external patients. However, in order to reduce the workload of other departments, this department still accepts patients transferred from other nodes. Such rule can ensure that the doctor can focus on diagnosing the patients continuously. Ayse and Surendra~\cite{AS99} considered an open finite manufacturing/queueing networks with $N$-policy, where a station  is assigned to alternative jobs at its idle time till the accumulated work at the station reaches a predetermined level of $N$ jobs.
\par
By the definitions of decay parameter and quasi-stationary distribution (see Definition~\ref{def1.2} below), we can see that the decay property and the decay parameter and quasi-stationary distribution of a stopped Markov queue describe the degree of busyness during its busy period.
Li and Chen~\cite{LC08} considered the decay property of one-dimensional Markov queueing system with idle restriction.
\par
For convenience of our discussion, we adopt the following conventions throughout this paper:
\par
(C-1)\ $\mathbf{Z}^d_+=\{(i_1,\cdots,i_d): i_1,\cdots,i_d\in \mathbf{Z}_+\}$.
\par
(C-2)\ $[0,1]^d=\{(x_1,\cdots,x_d):0\leq x_1,\cdots,x_d\leq 1\}$.
\par
(C-3)\ $\chi_{_{\mathbf{Z}_+^d}}(\cdot)$ is the indicator of $\mathbf{Z}_+^d$.
\par
(C-4)\ $\emph{\textbf{0}}=(0,\cdots,0)$, $\emph{\textbf{1}}=(1,\cdots,1)$, $\emph{\textbf{e}}_r=(0,\cdots,1_r,\cdots,0)$ are vectors in $[0,1]^d$.
\par
(C-5)\ $T_{rs}(\emph{\textbf{i}})=\emph{\textbf{i}}-\emph{\textbf{e}}_r
+\emph{\textbf{e}}_s$ for $r,s\in \{1,2,\cdots,d\}$ and $\emph{\textbf{i}}\in \mathbf{Z}_+^d$ with $i_r>0$.
\par
(C-6)\ Denote $\emph{\textbf{x}}^{\emph{\textbf{j}}}:=\prod\limits_{r=1}^dx_r^{j_r}$ for any $\emph{\textbf{x}}=(x_1,\cdots,x_d)$ with $x_r\geq 0\ (r=1,\cdots,d)$ and for $\emph{\textbf{j}}=(j_1,\cdots,j_d)\in \mathbf{Z}_+^d$.
\par
By the above description in (i)-(iv), the stopped $M^X/M/1$-queuing network with $d$ nodes satisfies the following conditions:
\par
{\rm{(a)}}\ the state space is $\mathbf{Z}_+^d$;
\par
{\rm{(b)}}\ its generator $Q=(q_{\emph{\textbf{ij}}}:\emph{\textbf{i}},\emph{\textbf{j}}\in \mathbf{Z}_+^d)$ satisfies
\begin{eqnarray}\label{eq1.2}
q_{\emph{\textbf{i}}\emph{\textbf{j}}}
   =\begin{cases}
    \lambda_{rl},
    &  \mbox{if}\  \emph{\textbf{i}}\neq \emph{\textbf{0}}, i_r>0,\ \emph{\textbf{j}}=\emph{\textbf{i}}+l\emph{\textbf{e}}_r,\\
    \mu_{r}\gamma_{_{r0}},
    &  \mbox{if}\ i_r>0, \emph{\textbf{j}}=\emph{\textbf{i}}
    -\emph{\textbf{e}}_r,\\
   \mu_{r}\gamma_{_{rs}},
    & \mbox{if}\ i_r>0, \emph{\textbf{j}}=T_{rs}(\emph{\textbf{i}}),\\
   -\sum\limits_{r=1}^d(\lambda_r+\mu_r)\cdot \chi_{_{\{i_r>0\}}},
    &  \mbox{if}\ \emph{\textbf{j}}=\emph{\textbf{i}}\neq \emph{\textbf{0}},\\
     0,              & \mbox{otherwise}.
\end{cases}
\end{eqnarray}
\par
In the following, we assume that $\gamma_{rr}=0\ (1\leq r\leq d)$, $\gamma_{r0}>0$ for some $r\in \{1,2,\cdots,d\}$ and the routing matrix $\Gamma=(\gamma_{rs}:1\leq r,s\leq d)$ is irreducible, which guarantees that $\mathcal{C}:=\mathbf{Z}^d_+\setminus \{\emph{\textbf{0}}\}$ is irreducible.
\par
For the $q$-matrix $Q$ given in (\ref{eq1.2}), by Theorem~2.2.2 of Anderson~\cite{And91}, we know that $Q$ determines exactly one Markov process, i.e., the Feller minimal $Q$-process. Let $\{X(t):t\geq 0\}$ be the $Q$-process and $P(t)=(p_{\emph{\textbf{i}}\emph{\textbf{j}}}(t):
\emph{\textbf{i}},\emph{\textbf{j}}\in\mathbf{Z}^d_+)$ be the transition probability function of $\{X(t):t\geq 0\}$.
\par
It is well known that the decay parameter and quasi-stationary
distributions are closely linked with the so-called
$\mu$-subinvariant/invariant measures and
$\mu$-subinvariant/invariant vectors. An elementary but detailed
discussion of this theory can be seen in
Anderson~\cite{And91}. For convenience, we briefly repeat
these definitions as follows:
\par
\begin{definition}\label{def1.1}
Let $Q=(q_{\emph{\textbf{ij}}}:\emph{\textbf{i}},\emph{\textbf{j}}\in \mathbf{Z}_+^d)$ be the $q$-matrix given in (\ref{eq1.2}) and $\mathcal{C}$ be a communicating class. Assume that $\mu\geq 0$.
A set $(m_{\emph{\textbf{i}}}:\emph{\textbf{i}}\in \mathcal{C})$ of positive numbers is
called a $\mu$-subinvariant measure for $Q$ on $\mathcal{C}$ if
\begin{eqnarray}\label{eq1.3}
 \sum_{\emph{\textbf{i}}\in C}m_{\emph{\textbf{i}}}q_{\emph{\textbf{ij}}}\leq -\mu m_{\emph{\textbf{j}}},\ \ \ \ \emph{\textbf{j}}\in \mathcal{C}.
\end{eqnarray}
If the equality holds in (\ref{eq1.3}), then $(m_{\emph{\textbf{i}}}:\emph{\textbf{i}}\in \mathcal{C})$ is
called a $\mu$-invariant measure for $Q$ on $\mathcal{C}$.
\end{definition}
\par
\begin{definition}\label{def1.2}
Let $Q=(q_{\emph{\textbf{ij}}}:\emph{\textbf{i}},\emph{\textbf{j}}\in \mathbf{Z}_+^d)$ be the $q$-matrix given in (\ref{eq1.2}), $P(t)=(p_{\emph{\textbf{ij}}}(t):\emph{\textbf{i}},\emph{\textbf{j}}\in \mathbf{Z}_+^d)$ be the $Q$-process and $\mathcal{C}$ be a communicating class. Assume that $(m_{\emph{\textbf{i}}}:\emph{\textbf{i}}\in \mathcal{C})$ is a probability distribution over $\mathcal{C}$.
Denote $p_{\emph{\textbf{j}}}(t)=\sum\limits_{\emph{\textbf{i}}\in C}m_{\emph{\textbf{i}}}p_{\emph{\textbf{ij}}}(t)$ for
$\emph{\textbf{j}}\in \mathcal{C}, t\geq 0$. If
\begin{eqnarray}\label{eq1.4}
  \frac{p_{\emph{\textbf{j}}}(t)}{\sum\limits_{\emph{\textbf{i}}\in C}p_{\emph{\textbf{j}}}(t)}= m_{\emph{\textbf{j}}},\ \ \ \ \emph{\textbf{j}}\in \mathcal{C}, t>0,
\end{eqnarray}
then $(m_{\emph{\textbf{i}}}:\emph{\textbf{i}}\in \mathcal{C})$ is called a quasi-stationary distribution.
\end{definition}
\par
Li and Chen~\cite{LC08,LC13} considered the decay properties of stopped Markovian bulk-arrival queues and Markovian bulk-arrival queues with control at idle time, respectively. Li and Wang~\cite{2012-LiWang} discussed the decay property of $n$-type branching
processes. Chen, Li, Wu and Zhang~\cite{CLWZ21} studied the decay parameter for general stopped Markovian
bulk-arrival and bulk-service queues. The main purpose of this paper is to investigate the
decay properties of stopped $d$-dimensional Markov queueing models. If $d=1$, then it becomes the model considered in Li and Chen~\cite{LC08}.
Therefore, we assume $d\geq 2$ through out this paper. Different from the one-dimensional case, when a customer at one node has finished his/her service, he/she can enter another node and make the queue length at another node changed.
Therefore, the methods used in Li and Chen~\cite{LC13} and Chen, Li, Wu and Zhang~\cite{CLWZ21} are not applicable (see Theorems~\ref{th3.2}-\ref{th3.3}) and
some new approaches should be used in the current situation, which is the main contribution of this paper.
\par
The structure of this paper is organized as follows. Some preliminary results are
firstly establish in Section 2. In Section 3, the exact value of decay parameter is obtained. The $\lambda_{\mathcal{C}}$-invariant measure and quasi-stationary distribution are discussed in
Section 4.

\par
\vspace{5mm}
 \setcounter{section}{2}
 \setcounter{definition}{0}
 \setcounter{equation}{0}
 \setcounter{theorem}{0}
 \setcounter{lemma}{0}
 \setcounter{corollary}{0}
\noindent {\large \bf 2. Preliminaries}
 \vspace{3mm}
\par
In order to discuss the decay property of stopped $d$-dimensional Markov queuing models with $d\geq 2$, we make some preliminaries regarding our model in this section. Since $Q$ is determined by the sequences $\{\lambda_{rk}:k\geq 1, r=1,\cdots,d\}$, $\{\gamma_{rs}:1\leq r\leq d, 0\leq s\leq d\}$ and $\{\mu_r:1\leq r\leq d\}$, we define the following functions as
\begin{eqnarray*}
&&\Lambda_r(x_r)=\sum_{k=1}^{\infty}\lambda_{rk}x_r^{k},\ \ \ r=1,\cdots,d\\
&& \Gamma_r(\emph{\textbf{x}})=\mu_r(\gamma_{r0}
+\sum_{s=1}^{d}\gamma_{rs}x_s),\ \ \ r=1,\cdots,d
\end{eqnarray*}
and denote
\begin{eqnarray*}
B_r(\emph{\textbf{x}}):=x_r[\Lambda_r(x_r)-\lambda_r]
+\Gamma_r(\emph{\textbf{x}})-\mu_rx_r, \ \ r=1,\cdots,d,
\end{eqnarray*}
where $\emph{\textbf{x}}=(x_1,\cdots,x_d)$. Let
\begin{eqnarray*}
\theta_r=\frac{1}{\limsup_{n\rightarrow \infty}\sqrt[n]{\lambda_{rn}}}
\end{eqnarray*}
be the convergence radius of $\Lambda_r(x_r)$ for each $r=1,\cdots, d$. It is obvious that $\theta_r\geq 1\ (r=1,\cdots,d)$ and therefore,
$\{B_r(\emph{\textbf{x}}):r=1,\cdots,d\}$ are well-defined at least on $\prod\limits_{s=1}^d[0,\theta_s]$.
\par
For convenience of notation, let
\begin{eqnarray}\label{eq2.1}
b^{(r)}_{\emph{\textbf{j}}}=
\begin{cases}
\mu_r\gamma_{_{r0}},\ & \emph{\textbf{j}}=\emph{\textbf{0}},\\
-\mu_r-\lambda_r,\ & \emph{\textbf{j}}=\emph{\textbf{e}}_r,\\
\mu_r\gamma_{{rs}},\ & \emph{\textbf{j}}=\emph{\textbf{e}}_s, s\neq r,\\
\lambda_{rk}, \ & \emph{\textbf{j}}=(k+1)\emph{\textbf{e}}_r, k\geq 1,\\
0,\ & \text{otherwise}.
\end{cases}
\end{eqnarray}
Then, $(q_{\emph{\textbf{ij}}};\emph{\textbf{i}},\emph{\textbf{j}}\in \mathbf{Z}_+^d)$ and $\{B_r(\emph{\textbf{x}}):1\leq r\leq d\}$ can be rewritten as
\begin{eqnarray*}
q_{\emph{\textbf{ij}}}=
\begin{cases}
\sum\limits_{r=1}^db^{(r)}_{\emph{\textbf{j}}
-\emph{\textbf{i}}+\emph{\textbf{e}}_r}\chi_{_{\{i_r>0,
\emph{\textbf{j}}-\emph{\textbf{i}}+\emph{\textbf{e}}_r\in \mathbf{Z}_+^d\}}},\ & \emph{\textbf{i}}\neq \emph{\textbf{0}}, \emph{\textbf{j}}\in \mathbf{Z}_+^d,\\
0,\ & \text{otherwise}
\end{cases}
\end{eqnarray*}
and
\begin{eqnarray*}
B_r(\emph{\textbf{x}})=\sum\limits_{\emph{\textbf{j}}\in \mathbf{Z}_+^d}b^{(r)}_{\emph{\textbf{j}}}\emph{\textbf{x}}
^{\emph{\textbf{j}}}
\end{eqnarray*}
respectively.
\par
The following lemma shows the basic property of the transition probability function of $\{X(t):t\geq 0\}$.
\par
\begin{lemma}
\label{le2.1}
 Let $Q=(q_{\textbf{ij}}:\textbf{i},\textbf{j}\in \mathbf{Z}_+^d)$ be the $q$-matrix given in $(\ref{eq1.2})$, $P(t)=(p_{\textbf{ij}}(t):\textbf{i},\textbf{j}\in \mathbf{Z}_+^d)$ and $\Phi(\lambda)=(\phi_{\textbf{ij}}(\lambda):\textbf{i},\textbf{j}\in \mathbf{Z}_+^d)$ be the $Q$-function and
$Q$-resolvent $($i.e., $\phi_{\textbf{ij}}(\lambda):=\int_0^{\infty}e^{-\lambda t}p_{\textbf{ij}}(t)dt$$)$, respectively.
Then for any $\textbf{i}\in \mathbf{Z}_+^d$,
\begin{eqnarray}\label{eq2.2}
p'_{\textbf{i0}}(t)+\frac{\partial F_{\textbf{i}}(t,\textbf{x})}{\partial t}=
\sum_{r=1}^dB_r(\textbf{x})\cdot F^{(r)}_{\textbf{i}}(t,\textbf{x}),
\end{eqnarray}
or in resolvent version
\begin{eqnarray}\label{eq2.3}
\lambda \phi_{\textbf{i0}}(\lambda)+ \lambda\Phi_{\textbf{i}}(\lambda,\textbf{x})-\textbf{x}^{\textbf{i}}=
\sum_{r=1}^dB_r(\textbf{x})\cdot\Phi^{(r)}_{\textbf{i}}(\lambda,\textbf{x}),
\end{eqnarray}
where $F_{\textbf{i}}(t,\textbf{x})=\sum\limits_{\textbf{j}\in
\mathcal{C}} p_{\textbf{ij}}(t)\cdot\textbf{x}^{\textbf{j}}$, $F^{(r)}_{\textbf{i}}(t,\textbf{x})=\sum\limits_{\textbf{j}\in
\mathcal{C}_r^+} p_{\textbf{ij}}(t)\cdot\textbf{x}^{\textbf{j}-e_r}$, $\Phi_{\textbf{i}}(\lambda,\textbf{x})=\sum\limits_{\textbf{j}\in
\mathcal{C}} \phi_{\textbf{ij}}(\lambda)\cdot\textbf{x}^{\textbf{j}}$ and $\Phi^{(k)}_{\textbf{i}}(\lambda,\textbf{x})
=\sum\limits_{\textbf{j}\in
\mathcal{C}_r^+} \phi_{\textbf{ij}}(\lambda)\cdot\textbf{x}^{\textbf{j}-e_r}$ with $\mathcal{C}^+_r=\{\textbf{i}\in \mathcal{C}: i_r>0\}$.
\end{lemma}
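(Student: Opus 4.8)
The plan is to derive the two identities by applying the Kolmogorov forward equations to the $q$-matrix $Q$ of \eqref{eq1.2} and then repackaging the resulting sums through the generating function $F_{\textbf{i}}(t,\textbf{x})$. First I would write down, for fixed $\textbf{i}\in\mathbf{Z}_+^d$ and $\textbf{j}\in\mathcal{C}$, the forward equation $p'_{\textbf{ij}}(t)=\sum_{\textbf{k}\in\mathbf{Z}_+^d}p_{\textbf{ik}}(t)q_{\textbf{kj}}$, which is justified because $Q$ is conservative and the Feller minimal process satisfies the forward equations. Using the rewritten form $q_{\textbf{kj}}=\sum_{r=1}^d b^{(r)}_{\textbf{j}-\textbf{k}+\textbf{e}_r}\chi_{\{k_r>0,\,\textbf{j}-\textbf{k}+\textbf{e}_r\in\mathbf{Z}_+^d\}}$ from \eqref{eq2.1}, multiply by $\textbf{x}^{\textbf{j}}$ and sum over $\textbf{j}\in\mathcal{C}$. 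Since the state $\textbf{0}$ is absorbing-in-the-limit (it only appears as a sink via $q_{\textbf{k}\textbf{0}}=\mu_r\gamma_{r0}$ terms), summing over $\textbf{j}\in\mathcal{C}=\mathbf{Z}_+^d\setminus\{\textbf{0}\}$ rather than all of $\mathbf{Z}_+^d$ is exactly what produces the extra boundary term $p'_{\textbf{i}\textbf{0}}(t)$ on the left-hand side; I would isolate the $\textbf{j}=\textbf{0}$ contribution explicitly to see this.

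The key manipulation is the interchange of the double sum $\sum_{\textbf{j}\in\mathcal{C}}\sum_{\textbf{k}}$ into $\sum_{\textbf{k}}\sum_r\sum_{\textbf{j}}$, after which the inner $\textbf{j}$-sum over $b^{(r)}_{\textbf{j}-\textbf{k}+\textbf{e}_r}\textbf{x}^{\textbf{j}}$ factors, upon the substitution $\textbf{m}=\textbf{j}-\textbf{k}+\textbf{e}_r$, as $\textbf{x}^{\textbf{k}-\textbf{e}_r}\sum_{\textbf{m}}b^{(r)}_{\textbf{m}}\textbf{x}^{\textbf{m}}=\textbf{x}^{\textbf{k}-\textbf{e}_r}B_r(\textbf{x})$, using the identity $B_r(\textbf{x})=\sum_{\textbf{m}\in\mathbf{Z}_+^d}b^{(r)}_{\textbf{m}}\textbf{x}^{\textbf{m}}$ recorded just before the lemma. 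Collecting the surviving $\textbf{k}$'s — which must satisfy $k_r>0$, i.e. $\textbf{k}\in\mathcal{C}_r^+$ — and recognizing $\sum_{\textbf{k}\in\mathcal{C}_r^+}p_{\textbf{ik}}(t)\textbf{x}^{\textbf{k}-\textbf{e}_r}=F^{(r)}_{\textbf{i}}(t,\textbf{x})$ yields exactly the right-hand side $\sum_{r=1}^d B_r(\textbf{x})F^{(r)}_{\textbf{i}}(t,\textbf{x})$. Noting that $\frac{\partial}{\partial t}F_{\textbf{i}}(t,\textbf{x})=\sum_{\textbf{j}\in\mathcal{C}}p'_{\textbf{ij}}(t)\textbf{x}^{\textbf{j}}$ gives \eqref{eq2.2}. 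The resolvent version \eqref{eq2.3} then follows by the standard Laplace transform: multiply \eqref{eq2.2} by $e^{-\lambda t}$, integrate over $t\in[0,\infty)$, use $\int_0^\infty e^{-\lambda t}p'_{\textbf{ij}}(t)\,dt=\lambda\phi_{\textbf{ij}}(\lambda)-\delta_{\textbf{ij}}$ together with $\int_0^\infty e^{-\lambda t}\frac{\partial}{\partial t}F_{\textbf{i}}(t,\textbf{x})\,dt=\lambda\Phi_{\textbf{i}}(\lambda,\textbf{x})-F_{\textbf{i}}(0,\textbf{x})$ and the initial conditions $p_{\textbf{ij}}(0)=\delta_{\textbf{ij}}$, so that the boundary terms combine into $-\textbf{x}^{\textbf{i}}$ (valid since $\textbf{i}\in\mathbf{Z}_+^d$; if $\textbf{i}=\textbf{0}$ both transform terms on the left vanish and the identity is trivial).

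The main obstacle is justifying the interchange of summation and, for \eqref{eq2.2}, the termwise differentiation in $t$; both require an absolute-convergence / dominated-convergence argument. For $\textbf{x}\in[0,1)^d$ strictly inside the polydisc this is immediate because $\sum_{\textbf{j}}p_{\textbf{ij}}(t)\le 1$ and $\textbf{x}^{\textbf{j}}$ decays geometrically, while $\sum_{\textbf{m}}|b^{(r)}_{\textbf{m}}|\textbf{x}^{\textbf{m}}<\infty$ there since $\Lambda_r$ has radius $\theta_r\ge 1$; the general case $\textbf{x}\in\prod_s[0,\theta_s]$ where the $B_r$ are still defined follows by a standard Abel/monotone-continuity extension in $\textbf{x}$. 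For the forward equation itself one invokes that $Q$ is bounded on each "level set" $\{\textbf{k}:k_r\le$ fixed$\}$ only weakly — more cleanly, one appeals to the fact that $Q$ is conservative and regular here (Theorem 2.2.2 of \cite{And91} already gives uniqueness of the minimal process), so the forward equations hold. I would present the computation first for $\textbf{x}\in[0,1)^d$ and then remark that analyticity in $\textbf{x}$ propagates both identities to the full domain $\prod_{s=1}^d[0,\theta_s]$.
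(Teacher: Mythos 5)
Your proposal is correct and follows essentially the same route as the paper: write the Kolmogorov forward equations using the $b^{(r)}_{\textbf{j}}$ coefficients from (\ref{eq2.1}), multiply by $\textbf{x}^{\textbf{j}}$, sum, interchange the order of summation to factor out $B_r(\textbf{x})$ against $F^{(r)}_{\textbf{i}}(t,\textbf{x})$, and take Laplace transforms for (\ref{eq2.3}). Your additional remarks on justifying the interchange of summation and termwise differentiation go beyond what the paper records (it asserts the identity without this discussion), but they are consistent with, and a refinement of, the same argument.
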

\begin{proof}
By the Kolmogorov forward equations, we know that for any $\emph{\textbf{i}},\emph{\textbf{j}}\in \mathbf{Z}_{+}^d$,
\begin{eqnarray*}
p'_{\emph{\textbf{i0}}}(t)&=&\sum\limits_{r=1}^d
p_{\emph{\textbf{i}}\emph{\textbf{e}}_r}(t)
b^{(r)}_{\emph{\textbf{0}}}\\
p'_{\emph{\textbf{ij}}}(t)
&=&\sum_{\emph{\textbf{l}}\in \mathcal{C}}p_{\emph{\textbf{i}}
\emph{\textbf{l}}}(t)\sum_{r=1}^db^{(r)}_{\emph{\textbf{j}}
-\emph{\textbf{l}}+\emph{\textbf{e}}_r}
\chi_{_{\{l_r>0,\emph{\textbf{j}}
-\emph{\textbf{l}}+\emph{\textbf{e}}_r\in \mathbf{Z}_+^d\}}}\\
&=&\sum_{r=1}^d\sum_{\emph{\textbf{l}}\in \mathcal{C}_r^+}p_{\emph{\textbf{i}}
\emph{\textbf{l}}}(t)b^{(r)}_{\emph{\textbf{j}}
-\emph{\textbf{l}}+\emph{\textbf{e}}_r}
\chi_{_{\{\emph{\textbf{j}}
-\emph{\textbf{l}}+\emph{\textbf{e}}_r\in \mathbf{Z}_+^d\}}},\ \ \emph{\textbf{j}}\in \mathcal{C}.
\end{eqnarray*}
Multiplying $\emph{\textbf{x}}^{\emph{\textbf{j}}}$ on both sides of the above equality and then summing on $\emph{\textbf{j}}\in \mathbf{Z}_+^d$ yields
\begin{eqnarray*}
\sum_{\emph{\textbf{j}}\in \mathbf{Z}^d_+}p'_{\emph{\textbf{ij}}}(t)\emph{\textbf{x}}^{\emph{\textbf{j}}}
=\sum_{r=1}^dB_r(\emph{\textbf{x}})\sum_{\emph{\textbf{j}}\in \mathcal{C}_r^+}
p_{\emph{\textbf{ij}}}(t)\emph{\textbf{x}}^{\emph{\textbf{j}}
-\emph{\textbf{e}}_r}
\end{eqnarray*}
Hence, (\ref{eq2.2}) is proved. Taking Laplace transform on both sides of (\ref{eq2.2})
immediately yields (\ref{eq2.3}).\hfill$\Box$
\end{proof}
\par
Denote $\emph{\textbf{B}}(\emph{\textbf{x}})=(B_1(\emph{\textbf{x}}),\cdots,
B_d(\emph{\textbf{x}}))$ and let
\begin{eqnarray*}
B_{rs}(\emph{\textbf{x}})=\frac{\partial B_r(\emph{\textbf{x}})}{\partial x_s},\quad g_{rs}(\emph{\textbf{x}})=\delta_{rs}+
\frac{B_{rs}(\emph{\textbf{x}})}{\mu_r+\lambda_r},\quad r,s=1,\cdots,d.
\end{eqnarray*}
Obviously, the functions $B_{rs}(\emph{\textbf{x}})$ and $g_{rs}(\emph{\textbf{x}})$ are related to the transition probabilities of the jump chain. The matrices $(B_{rs}(\emph{\textbf{x}}))$ and $(g_{rs}(\emph{\textbf{x}}))$ are denoted by $\emph{\textbf{B}}'(\emph{\textbf{x}})$ and $G(\emph{\textbf{x}})$, respectively. Since
\begin{eqnarray*}
g_{rs}(1)=
\begin{cases}
(\mu_r+\lambda_r)^{-1}\sum\limits_{k=1}^{\infty}(k+1)
\lambda_{rk}, \ & r=s,\\
(\mu_r+\lambda_r)^{-1}\mu_r\gamma_{_{rs}},\ & r\neq s,
\end{cases}
\end{eqnarray*}
it is easy to see that $G(\emph{\textbf{1}})$ is strictly positive, i.e., there exists an integer $N$ such that $[G(\emph{\textbf{1}})]^N>0$.
\par
Let $\rho(\emph{\textbf{x}})$ denote the maximal eigenvalue of $\emph{\textbf{B}}'(\emph{\textbf{x}})$. The following lemma presents a property of $\emph{\textbf{B}}(\emph{\textbf{x}})$.
\par
\begin{lemma}\label{le2.2}
The system of equations
\begin{eqnarray}\label{eq2.4}
\textbf{B}(\textbf{x})=\textbf{0}
\end{eqnarray}
has at most two solutions in $[0,1]^d$. Let $\textbf{q}=(q_1,\cdots,q_d)$ denote the smallest
nonnegative solution of $(\ref{eq2.4})$. Then,
\par
{\rm (i)}\  $q_r$ is the extinction probability when the process starts at state $\textbf{e}_r\ (r=1,\cdots,d)$.
Moreover, if $\rho(\textbf{1})\leq 0$, then $\textbf{q}=\textbf{1}$; while
if $\rho(\textbf{1})>0$, then $\textbf{q}<\textbf{1}$, i.e.,
$q_1,\cdots,q_d<1$.
\par
{\rm (ii)}\  $\rho(\textbf{q})\leq 0$.
\end{lemma}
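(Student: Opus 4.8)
The plan is to read $\textbf{B}(\textbf{x})=\textbf{0}$ as the fixed-point equation of a $d$-type Markov branching process. Setting $h_r(\textbf{x}):=x_r+B_r(\textbf{x})/(\mu_r+\lambda_r)$, the identity $B_r(\textbf{1})=0$ shows that each $h_r$ is a power series with nonnegative coefficients summing to $1$, so $\textbf{h}=(h_1,\dots,h_d)$ is coordinatewise convex and coordinatewise nondecreasing on $[0,1]^d$, $\textbf{h}(\textbf{1})=\textbf{1}$, and $\textbf{B}(\textbf{x})=\textbf{0}\Leftrightarrow\textbf{h}(\textbf{x})=\textbf{x}$. I would first record the routine preliminaries: the iterates of $\textbf{h}$ from $\textbf{0}$ increase to the least nonnegative fixed point $\textbf{q}\le\textbf{1}$, with $\textbf{q}>\textbf{0}$ because $\gamma_{r_00}>0$ for some $r_0$ and $\Gamma$ is irreducible; every fixed point $\textbf{s}\in[0,1]^d$ with some coordinate equal to $1$ must be $\textbf{1}$ (from $h_r(\textbf{s})=1$, $\gamma_{rr}=0$, and irreducibility of $\Gamma$); and, as the off-diagonal entries of $\textbf{B}'(\textbf{x})$ are the constants $\mu_r\gamma_{rs}$, $\textbf{B}'(\textbf{x})$ is an irreducible Metzler matrix, so $\rho(\textbf{x})$ is a genuine Perron root with a strictly positive eigenvector, while $G(\textbf{x})$ is entrywise nondecreasing in $\textbf{x}$ (only the diagonal entries $g_{rr}$ depend on $\textbf{x}$, each nondecreasing in $x_r$).

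For the first assertion, suppose there were three distinct solutions in $[0,1]^d$; since $\textbf{q}$ is the least and $\textbf{1}$ is one of them, a third solution $\textbf{s}$ then satisfies $\textbf{q}\le\textbf{s}<\textbf{1}$ coordinatewise and $\textbf{s}\ne\textbf{q}$ (so also $\textbf{q}<\textbf{1}$ coordinatewise). Expanding $\textbf{h}$ coordinatewise along the segments $[\textbf{q},\textbf{s}]$ and $[\textbf{q},\textbf{1}]$ gives $\textbf{s}-\textbf{q}=\bar G_1(\textbf{s}-\textbf{q})$ and $\textbf{1}-\textbf{q}=\bar G_3(\textbf{1}-\textbf{q})$, where $\bar G_1:=\int_0^1 G(\textbf{q}+t(\textbf{s}-\textbf{q}))\,dt$ and $\bar G_3:=\int_0^1 G(\textbf{q}+t(\textbf{1}-\textbf{q}))\,dt$ are nonnegative irreducible matrices with $\bar G_1\le\bar G_3$ (monotonicity of $G$ together with $\textbf{s}\le\textbf{1}$). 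By Perron--Frobenius, applied to these irreducible matrices with the nonnegative eigenvectors $\textbf{s}-\textbf{q}$ and $\textbf{1}-\textbf{q}$ of eigenvalue $1$, $\rho(\bar G_1)=\rho(\bar G_3)=1$; but for irreducible nonnegative matrices $\bar G_1\le\bar G_3$ with equal spectral radius must coincide, whence $g_{rr}$ is constant on $[q_r,1]$ for every $r$. Since $g_{rr}$ is a constant multiple of $\tfrac{d}{dx_r}(x_r\Lambda_r(x_r))$, which is analytic, this forces $\Lambda_r\equiv0$ for all $r$, and then $\textbf{B}(\textbf{x})=\textbf{0}$ reduces to the linear system $(I-\Gamma)\textbf{x}=(\gamma_{10},\dots,\gamma_{d0})^{\top}$ with unique solution $\textbf{1}$ ($I-\Gamma$ being a nonsingular $M$-matrix) --- contradicting the existence of $\textbf{s}$.

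For (i), the computation $\sum_{\textbf{j}}q_{\textbf{ij}}\textbf{q}^{\textbf{j}}=\sum_{r:\,i_r>0}\textbf{q}^{\textbf{i}-\textbf{e}_r}B_r(\textbf{q})=0$ shows that $f(\textbf{i}):=\textbf{q}^{\textbf{i}}$ is a bounded nonnegative $Q$-harmonic function with $f(\textbf{0})=1$; since $|q_{\textbf{ii}}|\le\sum_r(\mu_r+\lambda_r)$ the minimal $Q$-process is non-explosive, so $\textbf{q}^{X(t)}$ is a bounded martingale and optional stopping yields $P_{\textbf{e}_r}(X\text{ hits }\textbf{0})\le q_r$. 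When $\textbf{q}<\textbf{1}$, I would prove $\textbf{q}^{X(t)}\to0$ on the event of non-absorption --- almost surely no state of $\mathcal{C}$ is visited infinitely often (otherwise, by irreducibility of $\mathcal{C}$, a neighbour with a different $f$-value would also be visited infinitely often, contradicting convergence of the martingale), so $|X(t)|\to\infty$ --- and hence the martingale limit is $\mathbf 1_{\{X\text{ hits }\textbf{0}\}}$, giving $q_r=P_{\textbf{e}_r}(X\text{ hits }\textbf{0})$; when $\textbf{q}=\textbf{1}$ the claim is $P_{\textbf{e}_r}(X\text{ hits }\textbf{0})=1$, which I would obtain from the linear Lyapunov function $L(\textbf{i})=\textbf{v}\cdot\textbf{i}$, with $\textbf{v}>0$ the Perron eigenvector of $\textbf{B}'(\textbf{1})$, for which $(QL)(\textbf{i})=(\rho(\textbf{1})-1)\sum_{r:\,i_r>0}v_r\le-c<0$ on $\mathcal{C}$ in the subcritical case. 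The dichotomy is then obtained by differentiating $B_r$ along $[\textbf{q},\textbf{1}]$: at the $\textbf{1}$-end, convexity gives $\textbf{B}'(\textbf{1})(\textbf{1}-\textbf{q})\ge\textbf{0}$, hence $\rho(\textbf{1})\ge0$ whenever $\textbf{q}<\textbf{1}$ (the borderline $\rho(\textbf{1})=0$ being excluded by the analyticity argument above), while if $\rho(\textbf{1})>0$, perturbing $\textbf{1}$ along the Perron eigenvector of $\textbf{B}'(\textbf{1})$ makes $\textbf{B}<\textbf{0}$ there and downward iteration of $\textbf{h}$ produces a fixed point $<\textbf{1}$, forcing $\textbf{q}<\textbf{1}$.

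For (ii), differentiating $B_r$ along $[\textbf{q},\textbf{1}]$ at the $\textbf{q}$-end gives $\textbf{B}'(\textbf{q})(\textbf{1}-\textbf{q})\le\textbf{0}$ with $\textbf{1}-\textbf{q}>\textbf{0}$, so the Collatz--Wielandt bound for the Perron root of the Metzler matrix $\textbf{B}'(\textbf{q})$ gives $\rho(\textbf{q})\le0$ (and $\rho(\textbf{q})=\rho(\textbf{1})\le0$ trivially when $\textbf{q}=\textbf{1}$). I expect the main obstacle to be the first assertion: making the Perron--Frobenius comparison airtight --- that $\bar G_1\le\bar G_3$ with equal spectral radius forces $\bar G_1=\bar G_3$, and that the condition $g_{rr}$ constant on $[q_r,1]$ genuinely collapses to $\Lambda_r\equiv0$ --- and, secondarily, the exclusion in (i) of the bounded martingale converging while $X(t)$ oscillates indefinitely through a bounded set of states.
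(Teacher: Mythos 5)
The paper itself gives no real proof of this lemma: it notes that $G(\textbf{1})$ is positively regular, $\Gamma$ is irreducible and $\lambda_r>0$, and then cites Theorem~V.3.2 of Athreya and Ney, i.e.\ it reads $\textbf{B}(\textbf{x})=\textbf{0}$ as the fixed-point equation of a positively regular multi-type branching process and imports the classical extinction dichotomy wholesale. Your proposal reconstructs that classical theorem from scratch through the offspring generating function $\textbf{h}(\textbf{x})=\textbf{x}+D^{-1}\textbf{B}(\textbf{x})$, and the skeleton is sound: the segment/Perron--Frobenius comparison for ``at most two fixed points'' (including the collapse of $g_{rr}$ being constant to $\Lambda_r\equiv 0$ and then to a linear system with unique solution $\textbf{1}$), the Collatz--Wielandt bound for (ii), and the bounded-martingale identification of $\textbf{q}^{\textbf{i}}$ are exactly the ingredients hidden inside the cited theorem. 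What you lose is brevity; what you gain is a self-contained argument that makes explicit where irreducibility of $\Gamma$, $\gamma_{r0}>0$ and positive regularity actually enter.

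Two points need repair. First, your drift formula is wrong: for $L(\textbf{i})=\textbf{v}\cdot\textbf{i}$ with $\textbf{B}'(\textbf{1})\textbf{v}^T=\rho(\textbf{1})\textbf{v}^T$ a direct computation gives $(QL)(\textbf{i})=\rho(\textbf{1})\sum_{r:\,i_r>0}v_r$, not $(\rho(\textbf{1})-1)\sum_{r:\,i_r>0}v_r$; with the spurious $-1$ the drift would be strictly negative even for $0<\rho(\textbf{1})<1$, which is impossible since supercritical processes do not die out almost surely. Second, and more substantively, your proof that $q_r$ is the extinction probability does not cover the critical case $\rho(\textbf{1})=0$: there $\textbf{q}=\textbf{1}$, the harmonic function $\textbf{q}^{X(t)}\equiv 1$ is uninformative, and the corrected drift is exactly $0$, so the Lyapunov argument you invoke ``in the subcritical case'' yields nothing. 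The gap is closable with tools you already set up: in the critical case $L(X(t))$ is a nonnegative martingale and hence converges a.s.; your ``no state of $\mathcal{C}$ is visited infinitely often'' argument, applied to $L$ (whose value changes at every arrival), shows that on the event of non-absorption $|X(t)|\to\infty$ and therefore $L(X(t))\to\infty$, contradicting convergence, so absorption is almost sure. You should also add a sentence on why $\textbf{B}'(\textbf{1})$ has finite entries wherever you use it (if some $\sum_k k\lambda_{rk}=\infty$ one must declare that case supercritical by convention), though this boundary issue is glossed over by the paper as well.
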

\par
\begin{proof}
Since $G(\emph{\textbf{1}})$ is positively regular, $\Gamma=(\gamma_{rs}:1\leq r,s\leq d)$ is irreducible and $\lambda_r>0\ (r=1,\cdots,d)$, (i)-(ii) follow from Theorem~V.3.2 in Athreya and Ney~\cite{Ath72}.  \hfill $\Box$
\end{proof}
\par
In addition, the following lemma shows a property of $\rho(\emph{\textbf{x}})$.
\par
\begin{lemma}\label{le2.3}
\par
{\rm{(i)}}\ $\rho(\textbf{x})$ is continuous on $\prod\limits_{r=1}^d[0,\theta_r)$. Moreover, for any $\textbf{x}<\tilde{\textbf{x}}$ {\rm{(}}i.e., $x_r<\tilde{x}_r$ for all $r=1,\cdots,d${\rm{)}}, $\rho(\textbf{x})<\rho({\tilde{\textbf{x}}})$.
\par
{\rm (ii)}\ For any $\textbf{x}\in [0,1]^d$, there exist positive vectors $\textbf{v}(\textbf{x})$ and $\textbf{u}(\textbf{x})$ such that
\begin{eqnarray*}
\textbf{B}'(\textbf{x})\textbf{v}^T(\textbf{x})
=\rho(\textbf{x})\textbf{v}^T(\textbf{x})
\end{eqnarray*}
and
\begin{eqnarray*}
\textbf{u}(\textbf{x})\textbf{B}'(\textbf{x})
=\rho(\textbf{x})\textbf{u}(\textbf{x}).
\end{eqnarray*}
\end{lemma}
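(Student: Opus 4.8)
The plan is to read off the structure of $\textbf{B}'(\textbf{x})$, observe it is an irreducible essentially nonnegative (Metzler) matrix depending continuously on $\textbf{x}$, and deduce both assertions from classical Perron--Frobenius theory applied to the nonnegative irreducible shift $A(\textbf{x}):=\textbf{B}'(\textbf{x})+cI$, where $c:=\max_{1\leq r\leq d}(\lambda_r+\mu_r)$; the spectral radius of $A(\textbf{x})$ is then $\rho(\textbf{x})+c$. Differentiating $B_r(\textbf{x})=x_r[\Lambda_r(x_r)-\lambda_r]+\Gamma_r(\textbf{x})-\mu_rx_r$ and using $\gamma_{rr}=0$ gives the constant off-diagonal entries $B_{rs}(\textbf{x})=\mu_r\gamma_{rs}\geq 0$ ($s\neq r$) and the diagonal entries $B_{rr}(\textbf{x})=\Lambda_r(x_r)+x_r\Lambda_r'(x_r)-\lambda_r-\mu_r$. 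Since $\mu_r>0$, the positivity pattern of the off-diagonal part of $\textbf{B}'(\textbf{x})$ is exactly that of $\Gamma$, so irreducibility of $\Gamma$ makes $\textbf{B}'(\textbf{x})$ — and hence $A(\textbf{x})$ — irreducible for every admissible $\textbf{x}$. Because $\Lambda_r$ has nonnegative coefficients with $\lambda_r=\sum_k\lambda_{rk}>0$, each $B_{rr}$ is real-analytic in $x_r$ on $[0,\theta_r)$ and strictly increasing there: its derivative $2\Lambda_r'(x_r)+x_r\Lambda_r''(x_r)$ is positive for $x_r>0$, and $B_{rr}(0)=-\lambda_r-\mu_r$ is strictly less than $B_{rr}(x_r)$ for every $x_r>0$ since $\Lambda_r(x_r)>0$ there. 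In particular $B_{rr}(\textbf{x})\geq-\lambda_r-\mu_r$ everywhere, so $A(\textbf{x})\geq\textbf{0}$ on all of $[0,1]^d$ and on $\prod_r[0,\theta_r)$.

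For part (ii), fix $\textbf{x}\in[0,1]^d$ — the entries of $\textbf{B}'(\textbf{x})$ are then finite, which is why finiteness of the batch means $\sum_kk\lambda_{rk}$ is assumed (it is already used for $G(\textbf{1})$). Since $A(\textbf{x})$ is nonnegative and irreducible, the Perron--Frobenius theorem gives a simple dominant eigenvalue equal to its spectral radius $\rho(\textbf{x})+c$, with strictly positive right eigenvector $\textbf{v}(\textbf{x})$ and strictly positive left eigenvector $\textbf{u}(\textbf{x})$, this eigenvalue being the one of largest real part. Subtracting $c$, the same vectors satisfy $\textbf{B}'(\textbf{x})\textbf{v}^T(\textbf{x})=\rho(\textbf{x})\textbf{v}^T(\textbf{x})$ and $\textbf{u}(\textbf{x})\textbf{B}'(\textbf{x})=\rho(\textbf{x})\textbf{u}(\textbf{x})$; simultaneously this shows that $\rho(\textbf{x})$, the eigenvalue of $\textbf{B}'(\textbf{x})$ of largest real part, is well-defined and real.

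For part (i), every entry of $A(\textbf{x})$ is either constant or a power series in a single $x_r$ with radius of convergence $\theta_r$, hence continuous on $\prod_r[0,\theta_r)$; since the spectral radius of a matrix depends continuously on its entries (the eigenvalues vary continuously with the coefficients of the characteristic polynomial, which are polynomials in the entries), $\textbf{x}\mapsto\rho(\textbf{x})+c$, and thus $\rho$, is continuous there. For strict monotonicity, if $\textbf{x}<\tilde{\textbf{x}}$ then by the first paragraph $\textbf{B}'(\tilde{\textbf{x}})-\textbf{B}'(\textbf{x})$ is a diagonal matrix with all diagonal entries $B_{rr}(\tilde x_r)-B_{rr}(x_r)$ strictly positive, so $A(\tilde{\textbf{x}})\geq A(\textbf{x})$ with $A(\tilde{\textbf{x}})\neq A(\textbf{x})$; strict monotonicity of the Perron root of an irreducible nonnegative matrix under an entrywise increase then gives $\rho(\tilde{\textbf{x}})+c>\rho(\textbf{x})+c$, i.e. $\rho(\tilde{\textbf{x}})>\rho(\textbf{x})$.

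I do not anticipate a serious difficulty; the proof is essentially an application of Perron--Frobenius theory once the Metzler-and-irreducible structure is identified. The one point I would take care over is the strictness of the monotonicity of $B_{rr}$ at the boundary value $x_r=0$: there the derivative bound alone is not conclusive, and one must invoke $\lambda_r>0$ (equivalently $\Lambda_r\not\equiv 0$) to separate $B_{rr}(0)$ from the values $B_{rr}(x_r)$, $x_r>0$.
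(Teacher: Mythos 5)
Your proof is correct and rests on the same mathematics as the paper's: the paper obtains part (ii) from Theorem~V.2.1 of Athreya and Ney and the strict monotonicity from Lemma~2.6 of Li and Wang, both of which are exactly the Perron--Frobenius facts for the irreducible Metzler matrix $\textbf{B}'(\textbf{x})$ that you derive directly via the shift $\textbf{B}'(\textbf{x})+cI$, while your continuity argument (continuous dependence of the spectral radius on the entries) replaces the paper's equivalent sign-bracketing argument on the characteristic polynomial. Your observation that the off-diagonal entries $\mu_r\gamma_{rs}$ are constant in $\textbf{x}$, so that only the diagonal moves, is a clean way to see the strict monotonicity that the paper leaves to the citation.
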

\par
\begin{proof}
The characteristic polynomial of $\emph{\textbf{B}}'(\emph{\textbf{x}})$ can be expressed as
\begin{eqnarray*}
f(\lambda;\emph{\textbf{x}})=\lambda^d+A_1(\emph{\textbf{x}})\lambda^{d-1}
+\cdots+A_d(\emph{\textbf{x}}),
\end{eqnarray*}
where all the functions $A_r(\emph{\textbf{x}})\ (r=1,\cdots,d)$ are continuous in $\emph{\textbf{x}}\in \prod\limits_{r=1}^d[0,\theta_r)$ because $B_{rs}(\emph{\textbf{x}})\ (r,s=1,\cdots,d)$ are continuous. For any $\emph{\textbf{x}}\rightarrow \tilde{\emph{\textbf{x}}}$, if $f(\lambda;\tilde{\emph{\textbf{x}}})>0$, then it follows from the continuity of $f(\lambda;\emph{\textbf{x}})$ in $\emph{\textbf{x}}$ that $f(\lambda;\emph{\textbf{x}})>0$ for $\emph{\textbf{x}}$ closing to $\tilde{\emph{\textbf{x}}}$. Therefore, $\liminf\limits_{\emph{\textbf{x}}\rightarrow \tilde{\emph{\textbf{x}}}}\rho(\emph{\textbf{x}})\geq \rho(\tilde{\emph{\textbf{x}}})$. Similarly, if $f(\lambda;\tilde{\emph{\textbf{x}}})<0$, then  $f(\lambda;\emph{\textbf{x}})<0$ for $\emph{\textbf{x}}$ closing to $\tilde{\emph{\textbf{x}}}$. Therefore, $\limsup\limits_{\emph{\textbf{x}}\rightarrow \tilde{\emph{\textbf{x}}}}\rho(\emph{\textbf{x}})\leq \rho(\tilde{\emph{\textbf{x}}})$. Thus, $\rho(\emph{\textbf{x}})$ is continuous. The monotone property of $\rho(\emph{\textbf{x}})$ follows from Lemma~2.6 of Li and Wang~\cite{2012-LiWang}. (i) is proved.
\par
(ii) follows from Theorem~V.2.1 in Athreya and Ney~\cite{Ath72}. The proof is complete. \hfill $\Box$
\end{proof}

\par
\vspace{5mm}
 \setcounter{section}{3}
 \setcounter{equation}{0}
 \setcounter{theorem}{0}
 \setcounter{lemma}{0}
 \setcounter{corollary}{0}
 \setcounter{remark}{0}
\noindent {\large \bf 3. Decay parameter}
 \vspace{3mm}
 \par
Having made some preliminaries in the previous section, we now consider the decay parameter
$\lambda_{\mathcal{C}}$ of $P(t)$ on $\mathcal{C}=\mathbf{Z}^d_+\setminus\{\emph{\textbf{0}}\}$.
\par
For any $\lambda\in (-\infty,\infty)$, consider the system of inequalities:
\begin{eqnarray}\label{eq3.1}
\emph{\textbf{B}}(\emph{\textbf{x}})+\lambda \emph{\textbf{x}}\leq \emph{\textbf{0}}.
\end{eqnarray}
\par
Let
\begin{eqnarray}\label{eq3.2}
\lambda_*=\sup\{\lambda:(\ref{eq3.1})\ has\ a\ solution\ in\ [0,\infty)^d\}.
\end{eqnarray}
\par
The following lemma proves that $\lambda_*$ is finite and shows a property of the equation $\emph{\textbf{B}}(\emph{\textbf{x}})+\lambda \emph{\textbf{x}}=\emph{\textbf{0}}$ for $\lambda\leq \lambda_*$.
\par
\begin{lemma}\label{le3.1}
{\rm{(i)}}\ $0\leq\lambda_*\leq \min\{\lambda_r+\mu_r:r=1,\cdots,d\}$.
\par
{\rm{(ii)}}\ The system of equations
\begin{eqnarray}\label{eq3.3}
\textbf{B}(\textbf{x})+\lambda_* \textbf{x}=\textbf{0}
\end{eqnarray}
has exactly one solution $\textbf{q}_*=(q_{*1},\cdots,q_{*d})$ on $\prod\limits_{r=1}^d(0,\theta_r]$.
\par
{\rm{(iii)}}\ For any $\lambda<\lambda_*$, the system of equations
\begin{eqnarray}\label{eq3.4}
\textbf{B}(\textbf{x})+\lambda \textbf{x}=\textbf{0}
\end{eqnarray}
has exactly one solution on $\prod\limits_{r=1}^d(0,q_{*r}]$.
\end{lemma}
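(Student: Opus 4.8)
The plan is to dispose of (i) by a direct estimate and then to reduce (ii)--(iii) to a branching-type fixed-point problem for the map obtained by solving $\mathbf{B}^\lambda(\mathbf{x}):=\mathbf{B}(\mathbf{x})+\lambda\mathbf{x}=\mathbf{0}$ coordinatewise.

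For (i) I would argue as follows. The bound $\lambda_*\ge 0$ is immediate, since $\Lambda_r(1)=\lambda_r$ and $\sum_{s=0}^{d}\gamma_{rs}=1$ give $\mathbf{B}(\mathbf{1})=\mathbf{0}$, so that $\mathbf{x}=\mathbf{1}$ solves $(\ref{eq3.1})$ with $\lambda=0$. For the upper bound, take any solution $\mathbf{x}\ge\mathbf{0}$ of $(\ref{eq3.1})$. I would first show $\mathbf{x}>\mathbf{0}$: if $Z:=\{r:x_r=0\}\neq\emptyset$, then the $r$th inequality for $r\in Z$ reads $\mu_r\gamma_{r0}+\mu_r\sum_{s\neq r}\gamma_{rs}x_s\le 0$, which forces $\gamma_{r0}=0$ and $x_s=0$ whenever $\gamma_{rs}>0$; thus $Z$ is closed for the routing chain $\Gamma$, so by irreducibility $Z=\emptyset$ or $Z=\{1,\dots,d\}$, and $Z=\{1,\dots,d\}$ (i.e.\ $\mathbf{x}=\mathbf{0}$) would force $\gamma_{r0}=0$ for every $r$, contradicting $\gamma_{r0}>0$ for some $r$. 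Given $\mathbf{x}>\mathbf{0}$ and $\gamma_{rr}=0$, the $r$th inequality reads $x_r[\Lambda_r(x_r)-\lambda_r-\mu_r+\lambda]+\mu_r\gamma_{r0}+\mu_r\sum_{s\neq r}\gamma_{rs}x_s\le 0$, whence $\Lambda_r(x_r)-\lambda_r-\mu_r+\lambda\le 0$ and $\lambda\le\lambda_r+\mu_r-\Lambda_r(x_r)\le\lambda_r+\mu_r$ for every $r$; taking the supremum gives $0\le\lambda_*\le\min_r(\lambda_r+\mu_r)<\infty$. (Once the solution $\mathbf{q}_*$ of (ii) is at hand, $\Lambda_r(q_{*r})>0$ upgrades this to a strict inequality, which is what makes $F^{\lambda_*}$ below well defined.)

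For (ii)--(iii), note that whenever $\lambda<\min_r(\lambda_r+\mu_r)$ — in particular for all $\lambda\le\lambda_*$ — the equation $\mathbf{B}^\lambda(\mathbf{x})=\mathbf{0}$ is equivalent to $\mathbf{x}=F^\lambda(\mathbf{x})$ with $F^\lambda_r(\mathbf{x})=(\lambda_r+\mu_r-\lambda)^{-1}\bigl[x_r\Lambda_r(x_r)+\mu_r\gamma_{r0}+\mu_r\sum_{s\neq r}\gamma_{rs}x_s\bigr]$, and $\mathbf{B}^\lambda(\mathbf{x})\le\mathbf{0}\iff F^\lambda(\mathbf{x})\le\mathbf{x}$. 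On $\prod_r[0,\theta_r]$ each $F^\lambda_r$ is nondecreasing in all arguments and convex, $F^\lambda$ is nondecreasing in $\lambda$, and $DF^\lambda(\mathbf{x})=\mathrm{diag}\!\bigl((\lambda_r+\mu_r-\lambda)^{-1}\bigr)\bigl(\mathbf{B}'(\mathbf{x})+\mathrm{diag}(\lambda_r+\mu_r)\bigr)$ is nonnegative and irreducible; pairing it with the positive right Perron eigenvector of $\mathbf{B}'(\mathbf{x})$ from Lemma~\ref{le2.3}(ii) shows that its Perron eigenvalue is $<1$, $=1$, or $>1$ precisely when $\rho(\mathbf{x})+\lambda$ is $<0$, $=0$, or $>0$. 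For $\lambda<\lambda_*$ the definition $(\ref{eq3.2})$ of $\lambda_*$ supplies a solution of $(\ref{eq3.1})$ for some $\mu\in(\lambda,\lambda_*)$, which — since $F^\lambda\le F^\mu$ — is also a supersolution for $\lambda$; iterating $F^\lambda$ from $\mathbf{0}$ then yields a nondecreasing bounded sequence converging to the smallest solution $\mathbf{x}(\lambda)$ of $(\ref{eq3.4})$, and irreducibility together with $\gamma_{r0}>0$ for some $r$ forces $\mathbf{x}(\lambda)>\mathbf{0}$. Since $\lambda\mapsto\mathbf{x}(\lambda)$ is nondecreasing and part (i) keeps each $\Lambda_r(x_r(\lambda))$ bounded, $\mathbf{q}_*:=\lim_{\lambda\uparrow\lambda_*}\mathbf{x}(\lambda)$ exists in $\prod_r(0,\theta_r]$, solves $(\ref{eq3.3})$ by continuity, and dominates each $\mathbf{x}(\lambda)$; moreover any solution $\mathbf{z}$ of $(\ref{eq3.3})$ obeys $F^\lambda(\mathbf{z})\le\mathbf{z}$ for all $\lambda<\lambda_*$, hence $\mathbf{z}\ge\mathbf{x}(\lambda)$ and so $\mathbf{z}\ge\mathbf{q}_*$, i.e.\ $\mathbf{q}_*$ is the smallest solution of $(\ref{eq3.3})$. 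I would then show $\rho(\mathbf{q}_*)+\lambda_*=0$: it cannot be negative, for then $\mathbf{B}'(\mathbf{q}_*)+\lambda_*I$ is invertible and the implicit function theorem produces solutions of $\mathbf{B}^\lambda(\mathbf{x})=\mathbf{0}$ with $\mathbf{x}>\mathbf{0}$ for $\lambda$ slightly above $\lambda_*$, contradicting $(\ref{eq3.2})$; and it cannot be positive because, as the next paragraph shows, $\mathbf{q}_*$ then corresponds to the extinction probability $\mathbf{1}$ of an offspring map, to which Lemma~\ref{le2.2} applies and gives $\rho(\mathbf{q}_*)+\lambda_*\le 0$.

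For uniqueness I would substitute $\mathbf{y}=(x_1/q_{*1},\dots,x_d/q_{*d})$, turning $\mathbf{B}^\lambda(\mathbf{x})=\mathbf{0}$ into $\mathbf{y}=\widehat F^\lambda(\mathbf{y})$ with $\widehat F^\lambda$ nondecreasing, convex, with nonnegative irreducible Jacobian and (using $(\ref{eq3.3})$) $(\widehat F^\lambda(\mathbf{1}))_r=(\lambda_r+\mu_r-\lambda_*)/(\lambda_r+\mu_r-\lambda)$. For $\lambda=\lambda_*$ this is $\widehat F^{\lambda_*}(\mathbf{1})=\mathbf{1}$, and $\rho(\mathbf{q}_*)+\lambda_*=0$ makes $\widehat F^{\lambda_*}$ a \emph{critical} offspring map, so the argument behind Lemma~\ref{le2.2} — the left Perron eigenvector of its Jacobian at $\mathbf{1}$ combined with strict convexity of $y_r\mapsto y_r\Lambda_r(q_{*r}y_r)$ — shows $\mathbf{y}=\mathbf{1}$, i.e.\ $\mathbf{x}=\mathbf{q}_*$, is the only solution of $(\ref{eq3.3})$ on $\prod_r(0,\theta_r]$, proving (ii). For $\lambda<\lambda_*$ one has $\widehat F^\lambda(\mathbf{1})<\mathbf{1}$, so $\widehat F^\lambda$ maps $[0,1]^d$ into itself; if it had two fixed points $\mathbf{y}_1\le\mathbf{y}_2$ in $(0,1]^d$, the convex functions $t\mapsto(\widehat F^\lambda(\mathbf{y}_1+t(\mathbf{y}_2-\mathbf{y}_1)))_r-\bigl(y_{1,r}+t(y_{2,r}-y_{1,r})\bigr)$ would vanish at $t=0,1$ and hence be $\ge 0$ for $t\ge 1$, so pushing $t$ up until the segment first reaches $\partial[0,1]^d$, say in coordinate $r_0$ at a point $\mathbf{y}$, would give $(\widehat F^\lambda(\mathbf{y}))_{r_0}\ge 1$, contradicting $(\widehat F^\lambda(\mathbf{y}))_{r_0}\le(\widehat F^\lambda(\mathbf{1}))_{r_0}<1$; hence the fixed point is unique in $(0,1]^d$, which in the original variables is the unique solution of $(\ref{eq3.4})$ on $\prod_r(0,q_{*r}]$ and equals $\mathbf{x}(\lambda)$, proving (iii). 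The hard part is the limit $\lambda\uparrow\lambda_*$: showing $\mathbf{q}_*$ stays strictly inside each disc of convergence $[0,\theta_r]$ and, above all, that the system is exactly critical there ($\rho(\mathbf{q}_*)+\lambda_*=0$); once these are in place, the rest is a careful but routine combination of monotonicity, convexity, and Perron--Frobenius theory.
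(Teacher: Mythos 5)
Your part (i) is correct and in fact more careful than the paper's own argument (which divides by $x_r$ without first ruling out $x_r=0$), and your monotone-iteration construction of the minimal solution $\mathbf{q}_*$ of (\ref{eq3.3}) and your treatment of (iii) are sound. The genuine gap is in the uniqueness proof for (ii), which you hang entirely on the unconditional claim $\rho(\mathbf{q}_*)+\lambda_*=0$. That claim is false in general: your implicit-function-theorem argument for ``it cannot be negative'' requires $\mathbf{q}_*$ to lie in the \emph{interior} $\prod_r(0,\theta_r)$ of the region where the $\Lambda_r$ converge; if $q_{*r}=\theta_r$ for some $r$, the continued solution branch exits the domain of convergence and no contradiction with (\ref{eq3.2}) arises. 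This is precisely why the paper states criticality only under the interiority hypothesis (Theorem~\ref{th3.1}(ii)) and devotes all of Theorem~\ref{th3.3} to the case $\lambda_*<-\rho(\mathbf{q}_*)$, which it treats as genuinely possible. In that subcritical boundary case your ``critical offspring map'' argument degenerates: the left-Perron-eigenvector identity becomes $(1-\hat\rho)\sum_r u_r(y_r-1)=\tfrac12\sum_r u_r(\cdots)\ge 0$ with $\hat\rho<1$, which is consistent with a second solution $\mathbf{y}\ge\mathbf{1}$ and rules out nothing. So uniqueness on $\prod_r(0,\theta_r]$ is not established by your method.

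The paper closes this hole by a different and criticality-free device: if $\tilde{\mathbf{x}}\neq\hat{\mathbf{x}}$ both satisfied $\mathbf{B}(\mathbf{x})+\lambda_*\mathbf{x}\le\mathbf{0}$ on $\prod_r(0,\theta_r]$, then by convexity of each $f_r(\mathbf{x})=B_r(\mathbf{x})+\lambda_* x_r$ (its Hessian is diagonal with entries $\sum_k(k+1)k\lambda_{rk}x_r^{k-1}\ge 0$) a strict convex combination $\bar{\mathbf{z}}$ satisfies $f_r(\bar{\mathbf{z}})<0$ for all $r$, whence $\bar\lambda=\min_r\{-B_r(\bar{\mathbf{z}})/\bar z_r\}>\lambda_*$ still admits a solution of (\ref{eq3.1}) --- contradicting the definition (\ref{eq3.2}). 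This argument works on all of $\prod_r(0,\theta_r]$ and makes no reference to $\rho(\mathbf{q}_*)$. If you replace your criticality-based uniqueness step with this convexity/maximality argument (taking a little care that the strict inequality $f_r(\bar{\mathbf{z}})<0$ holds in every coordinate, e.g.\ by combining the strict convexity in the coordinates where $\tilde x_r\neq\hat x_r$ with the irreducibility of $\Gamma$), your proof goes through; as written, (ii) is not proved.
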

\par
\begin{proof} Since $\emph{\textbf{x}}=\emph{\textbf{1}}$ is a solution of (\ref{eq3.1}) for $\lambda=0$, we know that $\lambda_*\geq 0$. On the other hand, for any $\lambda$ such that (\ref{eq3.1}) has a solution $\bar{\emph{\textbf{x}}}\in [0,\infty)^d$,
\begin{eqnarray*}
\lambda\leq-\frac{B_r(\bar{\emph{\textbf{x}}})}{x_r}\leq \lambda_r+\mu_r,\ \ r=1,\cdots,d.
\end{eqnarray*}
Hence, $\lambda_*\leq \min\{\lambda_r+\mu_r:r=1,\cdots,d\}$. (i) is proved.
\par
Next prove (ii). Since $\gamma_{r0}>0$ for all $r\in \{1,2,\cdots,d\}$, by the definition of $\lambda_*$ and the continuity of $B_r(\emph{\textbf{x}})$, we know that the system of inequalities
\begin{eqnarray}\label{eq3.5}
\emph{\textbf{B}}(\emph{\textbf{x}})+\lambda_* \emph{\textbf{x}}\leq \emph{\textbf{0}}
\end{eqnarray}
has solution on $\prod\limits_{r=1}^d(0,\theta_r]$. Next, we prove that (\ref{eq3.5})
has exactly one solution on $\prod\limits_{r=1}^d(0,\theta_r]$.
Indeed, suppose $\tilde{\emph{\textbf{x}}}=(\tilde{x}_1,\cdots,\tilde{x}_d)$ and $\hat{\emph{\textbf{x}}}=(\hat{x}_1,\cdots,\hat{x}_d)$ are two different solutions of (\ref{eq3.5}). Denote $f_r(\emph{\textbf{x}})=B_r(\emph{\textbf{x}}))+\lambda_*x_r,\ (r=1,\cdots,d)$ and let $f''_r(\emph{\textbf{x}})$ be the Hessian matrix of $f_r(\emph{\textbf{x}})$, i.e., $f''_r(\emph{\textbf{x}})=(\frac{\partial^2f_r(\emph{\textbf{x}})}{\partial x_k\partial x_l}:k,l=1,\cdots,d)$. Then, for any $r$, the matrix $f''_r(\emph{\textbf{x}})=diag(\sum\limits_{k=1}^{\infty}(k+1)k
\lambda_{1k}x_1^{k-1},\cdots,
\sum\limits_{k=1}^{\infty}(k+1)k\lambda_{dk}x_d^{k-1})$, which is positive definite for $\emph{\textbf{x}}> \emph{\textbf{0}}$. Therefore, $f_r(\emph{\textbf{x}})$ is convex on $\prod\limits_{s=1}^d(0,\theta_s]$. By the property of convex function, we know that for any $\delta\in (0,1)$ and $\emph{\textbf{z}}=(1-\delta)\tilde{\emph{\textbf{x}}}
+\delta\hat{\emph{\textbf{x}}}$,
\begin{eqnarray*}
f_r(\emph{\textbf{z}})<(1-\delta)f_r(\tilde{\emph{\textbf{x}}})+\delta f_r(\hat{\emph{\textbf{x}}})\leq 0.
\end{eqnarray*}
Fix $\delta_0\in (0,1)$ and take $\bar{\emph{\textbf{z}}}=(1-\delta_0)\tilde{\emph{\textbf{x}}}
+\delta_0\hat{\emph{\textbf{x}}}$. Then
\begin{eqnarray*}
f_r(\bar{\emph{\textbf{z}}})=B_r(\bar{\emph{\textbf{z}}})
+\lambda_*\bar{z}_r<0, \ \ r=1,\cdots,d.
\end{eqnarray*}
Take
\begin{eqnarray*}
\bar{\lambda}=\min\{-\frac{B_r(\bar{\emph{\textbf{z}}})}{\bar{z}_r}:r=1,\cdots,d\}.
\end{eqnarray*}
Then, $\bar{\lambda}>\lambda_*$ and $B_r(\bar{\emph{\textbf{z}}})+\bar{\lambda}\bar{z}_r\leq B_r(\bar{\emph{\textbf{z}}})-\frac{B_r(\bar{\emph{\textbf{z}}})}
{\bar{z}_r}\cdot \bar{z}_r=0$ for $r=1,\cdots,d$, which contradicts the definition of $\lambda_*$. Therefore, (\ref{eq3.5}) has exactly one solution on $\prod\limits_{r=1}^d(0,\theta_r]$. Hence, by Li and Wang~\cite{2012-LiWang}, we further know that (\ref{eq3.3}) has exact one solution on $\prod\limits_{r=1}^d(0,\theta_r]$.
\par
Now prove (iii). Since $\emph{\textbf{q}}_*=(q_{*1},\cdots,q_{*d})$ is a solution of (\ref{eq3.3}), we see that $\emph{\textbf{B}}(\emph{\textbf{q}}_*)+\lambda \emph{\textbf{q}}_*< \emph{\textbf{0}}$. By Li and Wang~\cite{2012-LiWang}, (\ref{eq3.4}) has exactly one solution on $\prod\limits_{r=1}^d(0,q_{*r}]$.
 The proof is complete. \hfill $\Box$
\end{proof}
\par
By Lemma~\ref{le3.1}, we use $\emph{\textbf{q}}_*=(q_{*r}:r=1,\cdots,d)$ to denote the unique solution of (\ref{eq3.3}) and use $\emph{\textbf{q}}(\lambda)=(q_1(\lambda),\cdots,q_d(\lambda))$ to denote the unique solution of (\ref{eq3.4}) on $\prod\limits_{r=1}^d[0,q_{*r}]$ for $\lambda\in [0,\lambda_*]$ in the following.
The following theorem reveals the relationship of $\rho(\emph{\textbf{q}}(\lambda))$ and $\lambda\in [0,\lambda_*]$.
\par
\begin{theorem}\label{th3.1}
{\rm{(i)}}\ For any $\lambda\in [0,\lambda_*]$, let $\textbf{q}(\lambda)=(q_1(\lambda),\cdots,q_d(\lambda))$ be the unique solution of $\textbf{B}(\textbf{x})+\lambda \textbf{x}=\textbf{0}$ on $\prod\limits_{r=1}^d[0,q_{*r}]$. We have $\rho(\textbf{q}(\lambda))\leq -\lambda$.
\par
{\rm{(ii)}}\
If $\textbf{q}_*\in \prod\limits_{r=1}^d(0,\theta_r)$, then $\rho(\textbf{q}_*)=-\lambda_*$.
\par
{\rm{(iii)}}\ If $\rho(\textbf{1})=0$, then $\lambda_*=0$ and $\textbf{q}_*=\textbf{1}$.
\end{theorem}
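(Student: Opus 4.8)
The plan is to treat the three parts in turn, the first being the substantive one. A convenient device is the map $H_\lambda(\mathbf{x})=\mathbf{x}+D^{-1}(\mathbf{B}(\mathbf{x})+\lambda\mathbf{x})$ with $D=\mathrm{diag}(\lambda_1+\mu_1,\dots,\lambda_d+\mu_d)$, whose fixed points are exactly the solutions of $\mathbf{B}(\mathbf{x})+\lambda\mathbf{x}=\mathbf{0}$. A direct differentiation shows that for $\lambda\ge0$ the map $H_\lambda$ has all partial derivatives nonnegative on $\prod_r[0,\theta_r)$, hence is coordinatewise nondecreasing, and it is convex since the Hessian of each $B_r$ vanishes off the $(r,r)$ entry, which equals $\sum_{k\ge1}k(k+1)\lambda_{rk}x_r^{k-1}\ge0$ (already observed in the proof of Lemma~\ref{le3.1}). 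As in Li and Wang~\cite{2012-LiWang}, monotonicity then gives $H_\lambda^{(n)}(\mathbf{0})\uparrow\mathbf{q}(\lambda)$ for every $\lambda\le\lambda_*$ (boundedness above uses $H_\lambda(\mathbf{q}_*)=\mathbf{q}_*+(\lambda-\lambda_*)D^{-1}\mathbf{q}_*\le\mathbf{q}_*$, and strict positivity of the limit uses $\gamma_{r0}>0$ for some $r$ together with the irreducibility of $\Gamma$), and $\lambda\mapsto\mathbf{q}(\lambda)$ is nondecreasing on $[0,\lambda_*]$ since $H_{\lambda_2}\ge H_{\lambda_1}$ pointwise when $\lambda_1\le\lambda_2$.

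For (i) I argue by contradiction: suppose $\rho(\mathbf{q}(\lambda))+\lambda>0$ for some $\lambda\in[0,\lambda_*]$. Take $\mathbf{v}>0$ a right eigenvector of $\mathbf{B}'(\mathbf{q}(\lambda))$ for $\rho(\mathbf{q}(\lambda))$ (Lemma~\ref{le2.3}(ii)); since $\mathbf{q}(\lambda)>\mathbf{0}$, the point $\mathbf{y}_t=\mathbf{q}(\lambda)-t\mathbf{v}$ lies in $(0,\infty)^d$ for small $t>0$, and one checks $q_r(\lambda)<q_{*r}\le\theta_r$ for all $r$ when $\lambda<\lambda_*$, so a second-order Taylor expansion of each $B_r$ at $\mathbf{q}(\lambda)$ (using the Hessian structure noted above) yields
\[
B_r(\mathbf{y}_t)+\lambda y_{t,r}=-t\,v_r\bigl(\rho(\mathbf{q}(\lambda))+\lambda\bigr)+O(t^2),
\]
uniformly in $r$; when $\lambda=\lambda_*$ and some $q_{*r}=\theta_r$ the same sign comes from the one-sided directional derivative $-v_r(\rho(\mathbf{q}_*)+\lambda_*)<0$, which is well defined because $\mathbf{B}'(\mathbf{q}_*)$ is finite. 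Thus $\mathbf{B}(\mathbf{y}_t)+\lambda\mathbf{y}_t<\mathbf{0}$ strictly for small $t>0$, so there is $\bar\lambda>\lambda$ with $\mathbf{B}(\mathbf{y}_t)+\bar\lambda\mathbf{y}_t\le\mathbf{0}$; in particular $\bar\lambda\le\lambda_*$, so $\mathbf{q}(\bar\lambda)$ is defined and $\mathbf{q}(\bar\lambda)\ge\mathbf{q}(\lambda)$ by the monotonicity above. On the other hand $H_{\bar\lambda}(\mathbf{y}_t)\le\mathbf{y}_t$, so iterating $H_{\bar\lambda}$ from $\mathbf{0}\le\mathbf{y}_t$ gives $\mathbf{q}(\bar\lambda)=\lim_n H_{\bar\lambda}^{(n)}(\mathbf{0})\le\mathbf{y}_t=\mathbf{q}(\lambda)-t\mathbf{v}<\mathbf{q}(\lambda)\le\mathbf{q}(\bar\lambda)$, a contradiction. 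Hence $\rho(\mathbf{q}(\lambda))\le-\lambda$.

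Part (ii) is the mirror argument: assuming $\mathbf{q}_*\in\prod_r(0,\theta_r)$ and $\rho(\mathbf{q}_*)+\lambda_*<0$, take $\mathbf{v}>0$ a right eigenvector of $\mathbf{B}'(\mathbf{q}_*)$ and move \emph{outward}, $\mathbf{z}_t=\mathbf{q}_*+t\mathbf{v}$; the interiority hypothesis keeps $\mathbf{z}_t$ in $\prod_r[0,\theta_r)$ for small $t$, and the same expansion gives $\mathbf{B}(\mathbf{z}_t)+\lambda_*\mathbf{z}_t=t(\rho(\mathbf{q}_*)+\lambda_*)\mathbf{v}+O(t^2)<\mathbf{0}$, so $\mathbf{B}(\mathbf{z}_t)+(\lambda_*+\varepsilon)\mathbf{z}_t\le\mathbf{0}$ for small $\varepsilon>0$, contradicting the definition of $\lambda_*$; combined with (i) this gives $\rho(\mathbf{q}_*)=-\lambda_*$. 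For (iii), note first that $B_r(\mathbf{1})=0$ for every $r$ (from $\Lambda_r(1)=\lambda_r$ and $\sum_{s=0}^d\gamma_{rs}=1$), so $\mathbf{1}$ solves $\mathbf{B}(\mathbf{x})=\mathbf{0}$; let $\mathbf{u}>0$ be a left eigenvector of $\mathbf{B}'(\mathbf{1})$ for the eigenvalue $\rho(\mathbf{1})=0$ (Lemma~\ref{le2.3}(ii)). Convexity of the $B_r$ gives $B_r(\mathbf{x})\ge B_r(\mathbf{1})+\nabla B_r(\mathbf{1})\cdot(\mathbf{x}-\mathbf{1})$, hence $\mathbf{u}\,\mathbf{B}(\mathbf{x})\ge\mathbf{u}\,\mathbf{B}'(\mathbf{1})(\mathbf{x}-\mathbf{1})^T=0$ wherever $\mathbf{B}$ is finite. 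If $\lambda_*>0$, then by Lemma~\ref{le3.1}(ii) $\mathbf{B}(\mathbf{q}_*)=-\lambda_*\mathbf{q}_*<\mathbf{0}$, so $\mathbf{u}\,\mathbf{B}(\mathbf{q}_*)=-\lambda_*\,\mathbf{u}\,\mathbf{q}_*<0$, a contradiction; hence $\lambda_*=0$, and then $\mathbf{q}_*$ is the unique solution of $\mathbf{B}(\mathbf{x})=\mathbf{0}$ on $\prod_r(0,\theta_r]$, namely $\mathbf{1}$.

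The main obstacle I anticipate is rigor in the perturbation step of (i)--(ii): one must verify that the perturbed point is a genuine solution of $\mathbf{B}(\mathbf{x})+\bar\lambda\mathbf{x}\le\mathbf{0}$, which needs the Taylor remainder controlled uniformly for small $t$, and this forces attention to whether $\mathbf{q}(\lambda)$ (in particular $\mathbf{q}_*$) touches the boundary $\{x_r=\theta_r\}$, where $\mathbf{B}$ need not extend --- resolved by restricting to one-sided derivatives, by finiteness of the relevant Jacobians, and by the interiority hypothesis in (ii). The second delicate point is turning ``a slightly larger $\bar\lambda$ still admits a solution lying below $\mathbf{q}(\lambda)$'' into a clean contradiction; the monotone-iteration description $\mathbf{q}(\bar\lambda)=\lim_n H_{\bar\lambda}^{(n)}(\mathbf{0})$ is exactly what makes this immediate, since it forces $\mathbf{q}(\bar\lambda)\le\mathbf{y}_t<\mathbf{q}(\lambda)\le\mathbf{q}(\bar\lambda)$.
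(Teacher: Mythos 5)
Your proof is correct, but parts (i) and (iii) take a genuinely different route from the paper. For (i), the paper does not argue by perturbation at all: it rescales via $\tilde B_r(\mathbf{x})=B_r(\mathbf{q}(\lambda)\otimes\mathbf{x})+\lambda q_r(\lambda)x_r$, observes that $\mathbf{1}$ is the unique root of $\tilde{\mathbf{B}}$ in $[0,1]^d$, invokes the branching-process fact $\tilde\rho(\mathbf{1})\le 0$ (Lemma~\ref{le2.2}(ii), imported from Athreya--Ney), and then reads off $\rho(\mathbf{q}(\lambda))+\lambda\le 0$ from the factorization $\tilde{\mathbf{B}}'(\mathbf{1})=(\mathbf{B}'(\mathbf{q}(\lambda))+\lambda I)\,\mathrm{diag}(q_1(\lambda),\dots,q_d(\lambda))$. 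Your inward-perturbation plus monotone-iteration argument is more self-contained (it uses only the definition of $\lambda_*$ and the minimality of $\mathbf{q}(\lambda)$ under the iteration $H_\lambda^{(n)}(\mathbf{0})$), at the cost of being longer and of needing the care you flag about one-sided derivatives when $q_{*r}=\theta_r$ --- a boundary issue the paper's route also quietly assumes away, since $\tilde{\mathbf{B}}'(\mathbf{1})$ involves the same $B_{rs}(\mathbf{q}_*)$. Your (ii) coincides with the paper's (outward move along the positive right eigenvector, Taylor expansion, contradiction with the definition of $\lambda_*$). For (iii), the paper first establishes $\mathbf{q}_*>\mathbf{1}$ via Lemma~2.8 of Li--Wang and then uses a second-order Taylor expansion with Lagrange remainder before pairing with the left null vector $\mathbf{u}^*$; your convexity inequality $\mathbf{u}\,\mathbf{B}(\mathbf{x})\ge\mathbf{u}\,\mathbf{B}'(\mathbf{1})(\mathbf{x}-\mathbf{1})^T=0$ reaches the same sign contradiction without having to locate $\mathbf{q}_*$ relative to $\mathbf{1}$, which is a genuine simplification. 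Both versions rely on the same implicit finiteness of $\mathbf{B}'(\mathbf{1})$.
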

\begin{proof}
First prove (i). Let $\lambda\in [0,\lambda_*]$ and $\emph{\textbf{q}}(\lambda)=(q_1(\lambda),\cdots,q_d(\lambda))$ be the unique solution of $\emph{\textbf{B}}(\emph{\textbf{x}})+\lambda \emph{\textbf{x}}=\emph{\textbf{0}}$ on $\prod\limits_{r=1}^d(0,q_{*r}]$.
Consider
\begin{eqnarray*}
\tilde{B}_r(\emph{\textbf{x}})=B_r(\emph{\textbf{q}}(\lambda)\otimes \emph{\textbf{x}})+\lambda q_r(\lambda)x_r,\ \ r=1,\cdots,d,
\end{eqnarray*}
where $\emph{\textbf{q}}(\lambda)\otimes \emph{\textbf{x}}=(q_1(\lambda)x_1,\cdots,q_d(\lambda)x_d)$.
It is easy to see that
\begin{eqnarray*}
\tilde{\emph{\textbf{B}}}(\emph{\textbf{x}})
=(\tilde{B}_1(\emph{\textbf{x}}),\cdots,\tilde{B}_d(\emph{\textbf{x}}))
=\emph{\textbf{0}}
\end{eqnarray*}
has exactly one solution on $[0,1]^d$, that is $\emph{\textbf{1}}$. Let $\tilde{\rho}(\emph{\textbf{1}})$ denote the maximal eigenvalue of $\tilde{\emph{\textbf{B}}}'(\emph{\textbf{1}})$. By Lemma~\ref{le2.2}, $\tilde{\rho}(\emph{\textbf{1}})\leq 0$. On the other hand, one can check that $\tilde{\emph{\textbf{B}}}'(\emph{\textbf{1}})=(\emph{\textbf{B}}'(\emph{\textbf{q}}(\lambda))+\lambda I)\cdot {\rm{diag}}(q_1(\lambda),\cdots,q_d(\lambda))$. Hence, by Lemma 2.4 of Li and Wang~\cite{2012-LiWang}, we know that $\rho(\emph{\textbf{q}}(\lambda))+\lambda\leq 0$, i.e., $\rho(\emph{\textbf{q}}(\lambda))\leq -\lambda$. (i) is proved.
\par
Next prove (ii). By Lemma~2.6 of Li and Wang~\cite{2012-LiWang}, there exists a positive eigenvector $\emph{\textbf{v}}^*=(v^*_1,\cdots,v^*_d)$ such that
\begin{eqnarray*}
\sum_{s=1}^dB_{rs}(\emph{\textbf{q}}_*)v^*_s=
\rho(\emph{\textbf{q}}_*)v^*_r, \ \ r=1,\cdots,d.
\end{eqnarray*}
Consider $g_r(\varepsilon):=B_r(\emph{\textbf{q}}_{*}+\varepsilon \emph{\textbf{v}}^*)+\lambda_*(q_{*r}+\varepsilon v^*_r)\ (1\leq r\leq d)$ for $\varepsilon\in [0,\min\limits_{1\leq s\leq d}\frac{\theta_s-q_{s*}}{v^*_s})$. By Taylor's mean value theorem,
\begin{eqnarray*}
g_r(\varepsilon)
&=&g_r(0)+g'_r(0)\varepsilon+\frac{1}{2}g''_r(\delta \varepsilon)\varepsilon^2\\
&=&\sum_{s=1}^dB_{rs}(\emph{\textbf{q}}_{*})v^*_s\varepsilon
+\lambda_* v^*_r\varepsilon+\frac{1}{2}g''_r(\delta \varepsilon)\varepsilon^2\\
&=&[\rho(\emph{\textbf{q}}_*)+\lambda_*] v^*_r\varepsilon
+\frac{1}{2}g''_r(\delta \varepsilon)\varepsilon^2,\ \ r=1,\cdots,d,
\end{eqnarray*}
where $\delta\in (0,1)$.
Since $g_r(\cdot)\in C^{\infty}[0,\min\limits_{1\leq s\leq d}\frac{\theta_s-q_{s*}}{v^*_s})$, if $\rho(\emph{\textbf{q}}_*)<-\lambda_*$, then by the above equalities, there exists $\varepsilon_0>0$ such that
$$
g_r(\varepsilon)=B_r(\emph{\textbf{q}}_{*}+\varepsilon \emph{\textbf{v}}^*)+\lambda_*(q_{*r}+\varepsilon v^*_r)<0,\ \ r=1,\cdots, d
$$
for $\varepsilon\in [0,\varepsilon_0)$. Hence, there exists $\tilde{\varepsilon}>0$ such that
\begin{eqnarray*}
B_r(\emph{\textbf{q}}_{*}+\tilde{\varepsilon} \emph{\textbf{v}}^*)+\lambda_*(q_{*r}+\tilde{\varepsilon} v^*_r)<0,\ \ r=1,\cdots,d.
\end{eqnarray*}
Let $\tilde{\lambda}=-\max\{\frac{B_r(\emph{\textbf{q}}_{*}
+\tilde{\varepsilon} \emph{\textbf{v}}^*)+\lambda_*(q_{*r}+\tilde{\varepsilon} v^*_r)}{q_{*r}+\tilde{\varepsilon}v^*_r}:1\leq r\leq d\}$. Then, $\tilde{\lambda}>\lambda_*$ and
\begin{eqnarray*}
B_r(\emph{\textbf{q}}_{*}+\tilde{\varepsilon} \emph{\textbf{v}}^*)+\tilde{\lambda}(q_{*r}+\tilde{\varepsilon}v^*_r)\leq 0,\ \ r=1,\cdots,d,
\end{eqnarray*}
which contradicts the definition of $\lambda_*$. Therefore, by (i), $\rho(\emph{\textbf{q}}_*)=-\lambda_*$.
\par
Now prove (iii). Suppose that $\rho(\emph{\textbf{1}})=0$ but $\lambda_*>0$. Denote $D=\{r:q_{*r}< 1\}$. Since $\rho(\emph{\textbf{1}})=0$, by Lemma~2.8 in Li and Wang~\cite{2012-LiWang}, we know that $\{1,2,\cdots,d\}\setminus D\neq \emptyset$. If $D\neq \emptyset$, then we can assume $D=\{1,\cdots,\tilde{r}\}$ without loss of generality. Then,
\begin{eqnarray*}
B_{r}(q_{*1},\cdots,q_{*\tilde{r}},1,\cdots,1)\leq B_{r}(\emph{\textbf{q}}_*)<0,\ \ r=1,\cdots,\tilde{r}
\end{eqnarray*}
and
\begin{eqnarray*}
B_{r}(q_{*1},\cdots,q_{*\tilde{r}},1,\cdots,1)\leq B_{r}(\emph{\textbf{1}})\leq 0,\ \ r=\tilde{r}+1,\cdots,d.
\end{eqnarray*}
By Lemma 2.8 in Li and Wang~\cite{2012-LiWang},
\begin{eqnarray*}
\emph{\textbf{B}}(\emph{\textbf{x}})=0
\end{eqnarray*}
has a solution on $\prod\limits_{r=1}^{\tilde{r}}[0,q_{*r}]\times [0,1]^{d-\tilde{r}}$, which contradicts $\rho(\emph{\textbf{1}})=0$. Therefore, we have $\emph{\textbf{q}}_*\in \prod\limits_{r=1}^d(1,\theta_r]$. However,
\begin{eqnarray*}
-\lambda_*q_{*r}=B_r(\emph{\textbf{q}}_*)=\sum\limits_{s=1}^d
B_{rs}(\emph{\textbf{1}})(q_{*s}-1)+\frac{1}{2}\frac{\partial^2B_{r}(\emph{\textbf{a}})}{\partial x_r^2}(q_{*r}-1)^2,\ \ r=1,\cdots,d,
\end{eqnarray*}
where $\emph{\textbf{a}}\in \prod\limits_{s=1}^d[1,q_{*s}]$.
By Lemma~\ref{le2.2}, there exists a positive vector $\emph{\textbf{u}}^*=(u^*_1,\cdots,u^*_d)$ such that
\begin{eqnarray*}
\sum\limits_{r=1}^du^*_rB_{rs}(\emph{\textbf{1}})=0,\ \ s=1,\cdots,d.
\end{eqnarray*}
Therefore,
\begin{eqnarray*}
-\sum\limits_{r=1}^du^*_r\lambda_*q_{*r}=\frac{1}{2}
\sum\limits_{r=1}^du^*_r\frac{\partial^2B_{r}(\emph{\textbf{a}})}{\partial x_r^2}(q_{*r}-1)^2,
\end{eqnarray*}
which is a contradiction. Therefore, $\lambda_*=0$. Hence, $\emph{\textbf{q}}_*=\emph{\textbf{1}}$.
The proof is complete. \hfill $\Box$
\end{proof}
\par
The following theorem gives a lower bound of $\lambda_{\mathcal{C}}$.
\par
\begin{theorem}
\label{th3.2} Suppose that $Q$ is the $q$-matrix given in $(\ref{eq1.2})$ and $P(t)=(p_{\textbf{ij}}(t):\textbf{i},\textbf{j}\in \mathbf{Z}_+^d)$ is the $Q$-function. Then $\lambda_{\mathcal{C}}\geq \lambda_*$. Furthermore, if $\lambda_*=-\rho(\textbf{q}_*)$, then
\begin{eqnarray*}
\lambda_{\mathcal{C}}=\lambda_*.
\end{eqnarray*}
\end{theorem}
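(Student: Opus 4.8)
\noindent The plan is to prove the two assertions separately. The inequality $\lambda_{\mathcal C}\ge\lambda_*$ will be obtained by exhibiting an explicit $\lambda_*$-subinvariant vector built from $\textbf{q}_*$; the equality (under $\lambda_*=-\rho(\textbf{q}_*)$) will be obtained by showing that $\lambda_{\mathcal C}>\lambda_*$ would force a $\mu$-subinvariant measure to exist for some $\mu>\lambda_*$, which will be shown to be impossible.

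For the lower bound, the starting point is that evaluating the identity behind Lemma~\ref{le2.1} at $t=0$ (equivalently, computing directly from $(\ref{eq1.2})$) gives, for every $\textbf{z}$ with $z_r\in(0,\theta_r]$,
\begin{eqnarray*}
\sum_{\textbf{j}\in\mathcal C}q_{\textbf{ij}}\,\textbf{z}^{\textbf{j}}=\textbf{z}^{\textbf{i}}\sum_{r:\,i_r>0}\frac{B_r(\textbf{z})}{z_r}-q_{\textbf{i}\textbf{0}},\qquad \textbf{i}\in\mathcal C.
\end{eqnarray*}
Taking $\textbf{z}=\textbf{q}_*$, the solution of $(\ref{eq3.3})$ provided by Lemma~\ref{le3.1}(ii), one has $B_r(\textbf{q}_*)/q_{*r}=-\lambda_*$ for every $r$, so the right-hand side equals $-\lambda_*\textbf{q}_*^{\textbf{i}}\cdot\#\{r:i_r>0\}-q_{\textbf{i}\textbf{0}}\le-\lambda_*\textbf{q}_*^{\textbf{i}}$, because $\#\{r:i_r>0\}\ge1$ on $\mathcal C$, $\lambda_*\ge0$ and $q_{\textbf{i}\textbf{0}}\ge0$. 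Hence $y_{\textbf{i}}:=\textbf{q}_*^{\textbf{i}}$ $(\textbf{i}\in\mathcal C)$ is a strictly positive $\lambda_*$-subinvariant vector for $Q$ on $\mathcal C$. Extending it by $y_{\textbf{0}}=0$ and invoking the standard monotonicity property of the Feller minimal process (Anderson~\cite{And91}), one obtains $\sum_{\textbf{j}\in\mathcal C}p_{\textbf{ij}}(t)y_{\textbf{j}}\le e^{-\lambda_*t}y_{\textbf{i}}$, hence $p_{\textbf{ij}}(t)\le e^{-\lambda_*t}\textbf{q}_*^{\textbf{i}}\textbf{q}_*^{-\textbf{j}}$ and $\int_0^{\infty}e^{\nu t}p_{\textbf{ij}}(t)\,dt<\infty$ for every $\nu<\lambda_*$; by $(\ref{eq1.1})$, $\lambda_{\mathcal C}\ge\lambda_*$.

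For the equality it remains to prove $\lambda_{\mathcal C}\le\lambda_*$. Suppose $\lambda_{\mathcal C}>\lambda_*$ and fix $\mu\in(\lambda_*,\lambda_{\mathcal C})$, so that all $\phi_{\textbf{ij}}(-\mu)$ are finite. Since $Q$ is conservative and $P(t)$ satisfies the Kolmogorov forward equations (as used in Lemma~\ref{le2.1}), a monotone passage $\lambda\downarrow-\mu$ in the forward resolvent equations shows that, for a fixed $\textbf{i}_0\in\mathcal C$, the numbers $m_{\textbf{j}}:=\phi_{\textbf{i}_0\textbf{j}}(-\mu)$ $(\textbf{j}\in\mathcal C)$ are strictly positive and satisfy $\sum_{\textbf{i}\in\mathcal C}m_{\textbf{i}}q_{\textbf{ij}}=-\mu m_{\textbf{j}}-\delta_{\textbf{i}_0\textbf{j}}\le-\mu m_{\textbf{j}}$, i.e.\ they form a $\mu$-subinvariant measure for $Q$ on $\mathcal C$. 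Multiplying this inequality by $\textbf{x}^{\textbf{j}}$, summing over $\textbf{j}\in\mathcal C$ and using the representation of $q_{\textbf{ij}}$ above, one obtains, on the domain of convergence of $M(\textbf{x}):=\sum_{\textbf{j}\in\mathcal C}m_{\textbf{j}}\textbf{x}^{\textbf{j}}$, the functional inequality
\begin{eqnarray*}
\sum_{r=1}^d\frac{B_r(\textbf{x})}{x_r}\,M^{+}_r(\textbf{x})+\mu M(\textbf{x})\le c_0:=\sum_{\textbf{i}\in\mathcal C}m_{\textbf{i}}q_{\textbf{i}\textbf{0}}<\infty,
\end{eqnarray*}
where $M^{+}_r(\textbf{x})=\sum_{\textbf{i}\in\mathcal C,\,i_r>0}m_{\textbf{i}}\textbf{x}^{\textbf{i}}$. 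The plan is then to combine this with the hypothesis $\rho(\textbf{q}_*)=-\lambda_*$: using the strictly positive Perron eigenvector $\textbf{v}^*=\textbf{v}(\textbf{q}_*)$ of $\textbf{B}'(\textbf{q}_*)$ from Lemma~\ref{le2.3}(ii), the convexity of the $B_r$ exploited in Lemma~\ref{le3.1}, and the strict monotonicity and continuity of $\rho$ from Lemma~\ref{le2.3}(i), one analyses the displayed inequality along the ray $\textbf{q}_*+\varepsilon\textbf{v}^*$ in the spirit of the proof of Theorem~\ref{th3.1}(ii); a first-order expansion at $\textbf{q}_*$ should then produce a point $\textbf{x}>\textbf{0}$ with $B_r(\textbf{x})+\mu'x_r\le0$ $(r=1,\dots,d)$ for some $\mu'>\lambda_*$, contradicting the definition $(\ref{eq3.2})$ of $\lambda_*$. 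When $\lambda_*=0$ (so $\textbf{q}_*=\textbf{1}$ by Theorem~\ref{th3.1}(iii)) the first sum vanishes at $\textbf{q}_*$ and the argument is shorter; and when $\textbf{q}_*\in\prod_{r}(0,\theta_r)$ the hypothesis $\lambda_*=-\rho(\textbf{q}_*)$ is automatic by Theorem~\ref{th3.1}(ii).

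I expect the main obstacle to be exactly this last step, namely converting the functional inequality for the $\mu$-subinvariant measure into a contradiction. The difficulty is genuinely multidimensional: because a served customer may be routed to another node, the restriction of $m$ to a single coordinate axis does not obey a closed recursion, so the one-dimensional generating-function arguments of Li and Chen~\cite{LC13} and of Chen, Li, Wu and Zhang~\cite{CLWZ21} do not transfer. One must instead control the domain of convergence of $M$ near $\textbf{q}_*$ --- which is precisely where the hypothesis $\lambda_*=-\rho(\textbf{q}_*)$ enters, since otherwise $-\rho(\textbf{q}_*)<\lambda_*$ and $\textbf{q}_*$ is not the relevant exponential rate --- and carry out a Perron--Frobenius perturbation at $\textbf{q}_*$ that refines to the full $d$-dimensional setting the scalar perturbation used for Theorem~\ref{th3.1}(ii).
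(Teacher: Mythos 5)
Your proof of the first assertion is correct and is essentially the paper's argument: the identity $\sum_{\textbf{j}\in\mathcal C}q_{\textbf{ij}}\textbf{z}^{\textbf{j}}=\textbf{z}^{\textbf{i}}\sum_{r:i_r>0}B_r(\textbf{z})/z_r-q_{\textbf{i}\textbf{0}}$ evaluated at $\textbf{z}=\textbf{q}_*$ exhibits $(\textbf{q}_*^{\textbf{i}}:\textbf{i}\in\mathcal C)$ as a $\lambda_*$-subinvariant vector, and the comparison theorem for the minimal process then gives $\lambda_{\mathcal C}\ge\lambda_*$. The second assertion, however, is not proved: the decisive step is only announced as a ``plan,'' and you yourself identify it as the main obstacle. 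The gap is genuine, and the sketch as written is unlikely to close it. Your functional relation for the $\mu$-subinvariant measure is an \emph{inequality}, and inequalities cannot be differentiated; but pairing with the Perron eigenvector of $\textbf{B}'(\textbf{q}_*)$ only yields information after one differentiation, since it is the derivative matrix $(B_{rs})$, not $\textbf{B}$ itself, that the eigenvector annihilates. Direct evaluation of your inequality at $\textbf{x}=\textbf{q}_*$ gives $\mu M(\textbf{q}_*)-\lambda_*\sum_rM_r^+(\textbf{q}_*)\le c_0$, and since $M(\textbf{q}_*)\le\sum_rM_r^+(\textbf{q}_*)\le d\,M(\textbf{q}_*)$ this reduces to $(\mu-d\lambda_*)M(\textbf{q}_*)\le c_0$, which is no contradiction for $\mu$ slightly above $\lambda_*$ (and it is not even established that $M$ converges at $\textbf{q}_*$).

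What the paper does instead is structurally different at exactly this point. (a) In the critical case $\rho(\textbf{1})=0$ it works not with an arbitrary $\mu$-subinvariant measure but with the specific resolvent quantity $T^{(r)}(\textbf{x})=\int_0^\infty F^{(r)}_{\textbf{e}_1}(t,\textbf{x})\,dt$, for which the integrated forward equation gives the exact \emph{identity} $1-x_1=\sum_{r}B_r(\textbf{x})T^{(r)}(\textbf{x})$; this equality can be differentiated in each $x_s$, and contracting the differentiated identity with the null right-eigenvector of $\textbf{B}'(\textbf{x})$ kills the $B_{rs}$ terms and, letting $\textbf{x}\uparrow\textbf{1}$, leaves a nonnegative left-hand side equal to the strictly negative quantity $-v_1(\textbf{1})$. (b) The general case with $\lambda_*=-\rho(\textbf{q}_*)$ is reduced to (a) by the $h$-transform $\tilde q_{\textbf{ij}}=(q_{\textbf{ij}}+\delta_{\textbf{ij}}\lambda_*)\textbf{q}_*^{\textbf{j}}/\textbf{q}_*^{\textbf{i}}$ (Anderson, Lemma 5.4.2), under which $\tilde p_{\textbf{ij}}(t)=\textbf{q}_*^{\textbf{j}-\textbf{i}}p_{\textbf{ij}}(t)e^{\lambda_*t}$ and the transformed generating functions satisfy $\tilde{\textbf{B}}(\textbf{1})=\textbf{0}$ with $\tilde\rho(\textbf{1})=0$ --- this is precisely where the hypothesis $\lambda_*=-\rho(\textbf{q}_*)$ is consumed. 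To complete your argument you should either perform this $h$-transform first, or replace your subinvariant-measure inequality by the exact resolvent identity; in either form the differentiation step is indispensable and is what your outline is missing.
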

\begin{proof}
First prove $\lambda_{\mathcal{C}}\geq \lambda_*$. Let $\emph{\textbf{q}}_*=(q_{*1},\cdots,q_{*d})$ be the nonnegative solution of $(\ref{eq3.3})$. Define
\begin{eqnarray*}
v_{\emph{\textbf{i}}}=\emph{\textbf{q}}_*^{\emph{\textbf{i}}},\ \ \emph{\textbf{i}}=(i_1,\cdots,i_d)\in \mathcal{C}.
\end{eqnarray*}
Denote $D(\emph{\textbf{i}})=\{r:i_r>0\}$. Then $D(\emph{\textbf{i}})\neq \emptyset$ for $\emph{\textbf{i}}=(i_1,\cdots,i_d)\in \mathcal{C}$ and
\begin{eqnarray*}
&&\sum_{\emph{\textbf{j}}\in \mathcal{C}}q_{\emph{\textbf{ij}}}v_{\emph{\textbf{j}}}\\
&=&\sum_{r\in D(\emph{\textbf{i}})}\left(\mu_r[\gamma_{_{r0}}\emph{\textbf{q}}_{*}^
{\emph{\textbf{i}}-\emph{\textbf{e}}_r}
+\sum_{s=1}^d\gamma_{_{rs}}\emph{\textbf{q}}_*^{\emph{\textbf{i}}
-\emph{\textbf{e}}_r+\emph{\textbf{e}}_s}-
\emph{\textbf{q}}_*^{\emph{\textbf{i}}}]
+\sum_{k=1}^{\infty}\lambda_{rk}
\emph{\textbf{q}}_*^{\emph{\textbf{i}}+k\emph{\textbf{e}}_r}-\lambda_r
\emph{\textbf{q}}_*^{\emph{\textbf{i}}}\right)\\
&=&\emph{\textbf{q}}_*^{\emph{\textbf{i}}}\sum_{r\in D(\emph{\textbf{i}})}\left[q_{*r}^{-1}\mu_r(\gamma_{_{r0}}
+\sum_{s=1}^d\gamma_{_{rs}}q_{*s}-q_{*r})
+\sum_{k=1}^{\infty}\lambda_{rk}q_{*r}^{k}
-\lambda_r\right]\\
&=&\emph{\textbf{q}}_*^{\emph{\textbf{i}}}\sum_{r\in D(\emph{\textbf{i}})}\frac{B_r(\emph{\textbf{q}}_*)}{q_{*r}}\\
&\leq & -\lambda_*v_{\emph{\textbf{i}}}.
\end{eqnarray*}
Therefore, $(\emph{\textbf{q}}_*^{\emph{\textbf{j}}}:\emph{\textbf{j}}\in \mathcal{C})$ is a $\lambda_*$-subinvariant vector for $Q$ on $\mathcal{C}$. By the Anderson~\cite{And91} (remarks on p.175), we know that $\lambda_{\mathcal{C}}\geq \lambda_*$.
\par
Now prove the second assertion. By Theorem~\ref{th3.1}, there exists a unique $\emph{\textbf{q}}_*=(q_{1},\cdots,q_{*d})\in \prod\limits_{s=1}^d[0,\theta_s]$ such that $\emph{\textbf{B}}(\emph{\textbf{q}}_*)
+\lambda_*\emph{\textbf{q}}_*=\emph{\textbf{0}}$ and hence by Li and Wang~\cite{2012-LiWang}, we have $\rho(\emph{\textbf{q}}_*)\leq 0$.
\par
{\rm{(a)}}\ First assume that $\rho(\emph{\textbf{1}})=0$. In this case, $\lambda_*=0$ and $\emph{\textbf{q}}_*=\emph{\textbf{1}}$.
Suppose that $\lambda_{\mathcal{C}}>\lambda_*=0$. Then for any $\lambda\in (0,\lambda_{\mathcal{C}})$, we have $\int_0^{\infty}e^{\lambda t}p_{\emph{\textbf{i}}\emph{\textbf{e}}_r}(t)dt<\infty$ for $\emph{\textbf{i}}\neq \emph{\textbf{0}}$ and $r=1,\cdots,d$.
By the proof of Lemma~\ref{le2.1}, $\int_0^{\infty}e^{\lambda t}p'_{\emph{\textbf{i}}\emph{\textbf{0}}}(t)dt<\infty$ and hence $T^{(r)}(\emph{\textbf{x}}):
=\int_0^{\infty}F^{(r)}_{\emph{\textbf{e}}_1}(t,\emph{\textbf{x}})dt
<\infty$ for $\emph{\textbf{x}}\in \prod\limits_{s=1}^d[0,\theta_s)$.
Therefore,
\begin{eqnarray*}
1-x_1
=\sum\limits_{r=1}^dB_r(\emph{\textbf{x}})
\cdot T^{(r)}(\emph{\textbf{x}})
\end{eqnarray*}
and hence
\begin{eqnarray}\label{eq3.6}
-\delta_{1,s}
=\sum\limits_{r=1}^dB_{rs}(\emph{\textbf{x}})
\cdot T^{(r)}(\emph{\textbf{x}})+\sum\limits_{r=1}^dB_{r}(\emph{\textbf{x}})
\cdot \frac{\partial T^{(r)}(\emph{\textbf{x}})}{\partial x_s},\ \ s=1,\cdots,d.
\end{eqnarray}
On the other hand, by Lemma~\ref{le2.3}, there exists a positive vector $\emph{\textbf{v}}(\emph{\textbf{x}})=(v_1(\emph{\textbf{x}}),\cdots,
v_d(\emph{\textbf{x}}))$ such that
\begin{eqnarray*}
\sum_{s=1}^dB_{rs}(\emph{\textbf{x}})\cdot v_s(\emph{\textbf{x}})=0,\ \ r=1,\cdots,d.
\end{eqnarray*}
Multiplying $v_s(\emph{\textbf{x}})$ on the both sides of (\ref{eq3.6}) and then summing on $s$, yield that
\begin{eqnarray*}
\sum\limits_{r=1}^dB_r(\emph{\textbf{x}})\sum\limits_{s=1}^d
\frac{\partial T^{(r)}(\emph{\textbf{x}})}{\partial x_s}v_s(\emph{\textbf{x}})=-v_1(\emph{\textbf{x}}).
\end{eqnarray*}
Let $\emph{\textbf{x}}\uparrow \emph{\textbf{1}}$, we can see that the left hand side is nonnegative while the right hand side is $-v_1(\emph{\textbf{1}})<0$, which is a contradiction. Therefore, $\lambda_{\mathcal{C}}=0$.
\par
{\rm{(b)}}\ We now remove the condition $\rho(\emph{\textbf{1}})=0$. Suppose that $\lambda_*=-\rho(\emph{\textbf{q}}_*)$. Obviously, $q_{*r}>0\ (r=1,\cdots,d)$. We can define $\tilde{Q}=(\tilde{q}_{\emph{\textbf{ij}}}:\emph{\textbf{i}},\emph{\textbf{j}}\in \mathcal{C})$ as follows:
\begin{eqnarray*}
\tilde{q}_{\emph{\textbf{i}}\emph{\textbf{j}}}
   =\begin{cases}
\sum\limits_{r=1}^d\tilde{b}^{(r)}_{\emph{\textbf{j}}
-\emph{\textbf{i}}+\emph{\textbf{e}}_r}\chi_{_{\{i_r>0\}}},\ & \emph{\textbf{i}}, \emph{\textbf{j}}\in \mathcal{C}, \emph{\textbf{j}}-\emph{\textbf{i}}+\emph{\textbf{e}}_r\in \mathbf{Z}_+^d,\\
0,\ & \text{otherwise},
\end{cases}
\end{eqnarray*}
where $\tilde{b}^{(r)}_{\emph{\textbf{j}}}=b^{(r)}_{\emph{\textbf{j}}}
\emph{\textbf{q}}_*^{\emph{\textbf{j}}-\emph{\textbf{e}}_r}
+\delta_{\emph{\textbf{j}},\emph{\textbf{e}}_r}\lambda_*\ (\emph{\textbf{j}}\in \mathbf{Z}_+^d, r=1,\cdots,d)$ with $\{b^{(r)}_{\emph{\textbf{j}}}\}$ being given in (\ref{eq2.1}).
\par
Let $(\tilde{p}_{\emph{\textbf{ij}}}(t):\emph{\textbf{i}},\emph{\textbf{j}}\in \mathbf{C})$ be the minimal $\tilde{Q}$-function and $\tilde{B}_r(\emph{\textbf{x}})$ be the generating function of $\{\tilde{b}^{(r)}_{\emph{\textbf{j}}}:\emph{\textbf{j}}\in \mathbf{Z}_+^d\} \ (r=1,\cdots,d)$. Then
\begin{eqnarray*}
\tilde{B}_r(\emph{\textbf{x}})&=&\sum\limits_{\emph{\textbf{j}}\in \mathbf{Z}_+^d}\tilde{b}^{(r)}_{\emph{\textbf{j}}}\emph{\textbf{x}}
^{\emph{\textbf{j}}}\\
&=&\sum\limits_{\emph{\textbf{j}}\in \mathbf{Z}_+^d}(b^{(r)}_{\emph{\textbf{j}}}
\emph{\textbf{q}}_*^{\emph{\textbf{j}}-\emph{\textbf{e}}_r}
+\delta_{\emph{\textbf{j}},\emph{\textbf{e}}_r}\lambda_*)
\emph{\textbf{x}}
^{\emph{\textbf{j}}}\\
&=&q_{*r}^{-1}[B_r(\emph{\textbf{q}}_*\otimes\emph{\textbf{x}})
+\lambda_*q_{*r}x_r],\ \ \forall \emph{\textbf{x}}\in \prod\limits_{r=1}^d[0,\theta_r],
\end{eqnarray*}
where $\emph{\textbf{q}}_*\otimes\emph{\textbf{x}}:=(q_{*1}x_1,\cdots,q_{*d}x_d)$.
\par
Since $(\emph{\textbf{q}}_*^{\emph{\textbf{j}}}:\emph{\textbf{j}}\in \mathcal{C})$ is a $\lambda_*$-subinvariant vector for $Q$ on $\mathcal{C}$ and note that for any $\emph{\textbf{i}},\emph{\textbf{j}}\in \mathcal{C}$,
\begin{eqnarray*}
\tilde{q}_{\emph{\textbf{i}}\emph{\textbf{j}}}&=&
\sum\limits_{r=1}^d(b^{(r)}_{\emph{\textbf{j}}-\emph{\textbf{i}}
+\emph{\textbf{e}}_r}
\emph{\textbf{q}}_*^{\emph{\textbf{j}}-\emph{\textbf{i}}}
+\delta_{\emph{\textbf{i}},\emph{\textbf{j}}}\lambda_*)\chi_{_{\{i_r>0\}}}\\
&=&
\sum\limits_{r=1}^d(b^{(r)}_{\emph{\textbf{j}}-\emph{\textbf{i}}
+\emph{\textbf{e}}_r}
+\delta_{\emph{\textbf{i}},\emph{\textbf{j}}}\lambda_*)\chi_{_{\{i_r>0\}}}\emph{\textbf{q}}
_*^{\emph{\textbf{j}}-\emph{\textbf{i}}}\\
&=&(q_{\emph{\textbf{i}},\emph{\textbf{j}}}+\delta_{\emph{\textbf{i}},
\emph{\textbf{j}}}\lambda_*)\frac{\emph{\textbf{q}}
_*^{\emph{\textbf{j}}}}{\emph{\textbf{q}}_*^{\emph{\textbf{i}}}},
\end{eqnarray*}
by Lemma 5.4.2 of Anderson~\cite{And91}, we know that
\begin{eqnarray*}
\tilde{p}_{\emph{\textbf{i}}\emph{\textbf{j}}}(t)=\emph{\textbf{q}}
_*^{\emph{\textbf{j}}-\emph{\textbf{i}}}
p_{\emph{\textbf{i}}\emph{\textbf{j}}}(t)e^{\lambda_*t},\ \ \emph{\textbf{i}},\emph{\textbf{j}}\in \mathcal{C}.
\end{eqnarray*}
\par
Denote $\tilde{\lambda}_{\mathcal{C}}=\sup\{\lambda:
\int_0^{\infty}e^{\lambda t}\tilde{p}_{\emph{\textbf{i}}\emph{\textbf{j}}}(t)dt<\infty\}$, which is independent of $\emph{\textbf{i}},\emph{\textbf{j}}\in \mathcal{C}$ by Kingman~\cite{Kin63}. We claim that $\tilde{\lambda}_{\mathcal{C}}=0$. Indeed, suppose that $\tilde{\lambda}_{\mathcal{C}}>0$. It follows from Kolmogorov forward equations that
\begin{eqnarray*}
\sum_{\emph{\textbf{j}}\in \mathcal{C}}\tilde{p}'_{\emph{\textbf{ij}}}(t)\emph{\textbf{x}}^{\emph{\textbf{j}}}
=\sum_{r=1}^d\tilde{B}_r(\emph{\textbf{x}})\sum_{\emph{\textbf{j}}\in \mathcal{C}_r^+}
\tilde{p}_{\emph{\textbf{ij}}}(t)\emph{\textbf{x}}^{\emph{\textbf{j}}
-\emph{\textbf{e}}_r}-\sum\limits_{r=1}^d
\tilde{p}_{\emph{\textbf{i}}\emph{\textbf{e}}_r}(t)
\tilde{b}^{(r)}_{\emph{\textbf{0}}}.
\end{eqnarray*}
In particular,
\begin{eqnarray*}
-x_1
=\sum_{r=1}^d\tilde{B}_r(\emph{\textbf{x}})\sum_{\emph{\textbf{j}}\in \mathcal{C}_r^+}
\tilde{T}^{(r)}(\emph{\textbf{x}})-\sum\limits_{r=1}^d
p_{\emph{\textbf{e}}_1\emph{\textbf{e}}_r}(t)
\tilde{b}^{(r)}_{\emph{\textbf{0}}},
\end{eqnarray*}
where $\tilde{T}^{(r)}(\emph{\textbf{x}})=\sum\limits_{\emph{\textbf{j}}\in \mathcal{C}_r^+}
(\int_0^{\infty}\tilde{p}_{\emph{\textbf{e}}_1\emph{\textbf{j}}}(t)dt)\emph{\textbf{x}}^{\emph{\textbf{j}}
-\emph{\textbf{e}}_r}<\infty$. Hence,
\begin{eqnarray}\label{eq3.7}
-\delta_{s,1}
=\sum_{r=1}^d\tilde{B}_{rs}(\emph{\textbf{x}})\sum_{\emph{\textbf{j}}\in \mathcal{C}_r^+}
\tilde{T}^{(r)}(\emph{\textbf{x}})+\sum\limits_{r=1}^d
\tilde{B}_r(\emph{\textbf{x}})\frac{\partial \tilde{T}^{(r)}(\emph{\textbf{x}})}{\partial x_s},\ \ s=1,\cdots d.
\end{eqnarray}
\par
Define
\begin{eqnarray*}
\tilde{\lambda}_*=\sup\{\lambda\geq 0:\tilde{\emph{\textbf{B}}}(\emph{\textbf{x}})+\lambda \emph{\textbf{x}}\leq \emph{\textbf{0}}\ {\text{has\ a\ solution\ in}}\ [0,+\infty)^d\}.
\end{eqnarray*}
Then $\tilde{\lambda}_*=0$. Since $\emph{\textbf{q}}_*\in \prod\limits_{s=1}^d[0,\theta_s)$, the unique solution of $\tilde{\emph{\textbf{B}}}(\emph{\textbf{x}})=0$ is $\emph{\textbf{1}}<(\frac{\theta_1}{q_{*1}},\cdots,\frac{\theta_d}{q_{*d}})$. Therefore, by Theorem~\ref{th3.1}(ii), the maximal eigenvalue of $\tilde{\emph{\textbf{B}}}'(\emph{\textbf{1}})$ is $0$.
By Lemma~\ref{le2.3}, there exists a positive vector $\tilde{\emph{\textbf{v}}}=(\tilde{v}_1,\cdots,
\tilde{v}_d)$ such that
\begin{eqnarray*}
\sum_{r=1}^d\tilde{B}_{rs}(\emph{\textbf{1}})\cdot \tilde{v}_s=0,\ \ r=1,\cdots,d.
\end{eqnarray*}
Multiplying $\tilde{v}_l$ and then summing on $l$, yield that
 $-\tilde{v}_1=0$, which is a contradiction. Hence, $\tilde{\lambda}_{\mathcal{C}}=0$ and thus $\lambda_{\mathcal{C}}=\lambda_*$. The proof is complete.
\hfill $\Box$
\end{proof}
\par
The following theorem gives the exact value of $\lambda_{\mathcal{C}}$.
\par
\begin{theorem}\label{th3.3}
Suppose that $Q$ is the $q$-matrix given in $(\ref{eq1.2})$ and $P(t)=(p_{\textbf{ij}}(t):\textbf{i},\textbf{j}\in \mathbf{Z}_+^d)$ is the $Q$-function. Then $\lambda_{\mathcal{C}}=\lambda_*$.
\end{theorem}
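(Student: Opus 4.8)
The plan is to combine the lower bound $\lambda_{\mathcal C}\ge\lambda_*$ already established in Theorem~\ref{th3.2} with a matching upper bound $\lambda_{\mathcal C}\le\lambda_*$, so that the only remaining work is the reverse inequality. By Theorem~\ref{th3.2} we are done whenever $\lambda_*=-\rho(\textbf{q}_*)$, and by Theorem~\ref{th3.1}(ii) this equality holds automatically when $\textbf{q}_*\in\prod_{r=1}^d(0,\theta_r)$ (the interior case). Hence the entire burden of the proof is the boundary case in which $q_{*r}=\theta_r$ for at least one coordinate $r$. In that situation $-\rho(\textbf q_*)$ may be strictly larger than $\lambda_*$, so the change-of-measure trick used in part (b) of the proof of Theorem~\ref{th3.2} is not available, and a direct argument is needed.

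First I would set $\Omega=\{r:q_{*r}=\theta_r\}\ne\emptyset$ and $\Omega^c=\{1,\dots,d\}\setminus\Omega$, and suppose for contradiction that $\lambda_{\mathcal C}>\lambda_*$. Then for any $\lambda\in(\lambda_*,\lambda_{\mathcal C})$ and any $r$ we have $\int_0^\infty e^{\lambda t}p_{\textbf e_1\textbf e_r}(t)\,dt<\infty$, and as in the proof of Lemma~\ref{le2.1} the Laplace transforms $\Phi_{\textbf e_1}(-\lambda,\textbf x)$ and $\Phi^{(r)}_{\textbf e_1}(-\lambda,\textbf x)$ are finite and satisfy, by (\ref{eq2.3}),
\begin{eqnarray*}
\textbf x^{\textbf e_1}-\lambda\phi_{\textbf e_1\textbf 0}(-\lambda)-\lambda\Phi_{\textbf e_1}(-\lambda,\textbf x)+\sum_{r=1}^d B_r(\textbf x)\,\Phi^{(r)}_{\textbf e_1}(-\lambda,\textbf x)=0
\end{eqnarray*}
on $\prod_{s=1}^d[0,\theta_s)$. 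The idea is to evaluate this identity, and its first-order partial derivatives, along the ray $\textbf x=\textbf q(\lambda)\otimes\textbf t$ or simply let $\textbf x\uparrow\textbf q(\lambda)$, using that $\textbf B(\textbf q(\lambda))+\lambda\textbf q(\lambda)=\textbf 0$ (Lemma~\ref{le3.1}(iii)). Differentiating in $x_s$ and multiplying by the right/left Perron eigenvectors $\textbf v(\textbf x)$, $\textbf u(\textbf x)$ of $\textbf B'(\textbf x)$ supplied by Lemma~\ref{le2.3}(ii) kills the $\sum_r B_{rs}(\textbf x)(\cdots)$ terms against $\textbf v$ when $\rho(\textbf q(\lambda))=-\lambda$, leaving an identity of the form $\sum_r B_r(\textbf x)\,(\text{positive quantity})=-\,(\text{positive quantity})$; letting $\lambda\downarrow\lambda_*$ (hence $\textbf q(\lambda)\uparrow\textbf q_*$) forces $B_r(\textbf q_*)=-\lambda_* q_{*r}\le 0$ together with nonnegativity of the left side, producing a sign contradiction exactly as in part (a) of Theorem~\ref{th3.2}. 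The subtlety in the boundary case is that $\rho(\textbf q(\lambda))=-\lambda$ need only hold for $\lambda<\lambda_*$ (Theorem~\ref{th3.1}(ii) requires the interior), so I would run the argument at a fixed $\lambda\in(\lambda_*,\lambda_{\mathcal C})$ using $\textbf q(\lambda')$ with $\lambda'\uparrow\lambda_*$ from below and a continuity/monotonicity passage.

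The main obstacle I anticipate is handling the coordinates in $\Omega$, where $q_{*r}=\theta_r$ sits on the boundary of the domain of analyticity of $\Lambda_r$ and of $B_r$: the power series $\Lambda_r(\theta_r)$ and its derivatives may diverge, so the ``differentiate and plug in $\textbf q_*$'' step is not literally justified there. To get around this I would either (i) restrict attention to the face $x_r=\theta_r-\varepsilon$ for $r\in\Omega$ and $x_s\uparrow q_{*s}$ for $s\in\Omega^c$, run the eigenvector contraction on the $\Omega^c$-block of $\textbf B'$ only, and then send $\varepsilon\downarrow0$ using monotone convergence in the nonnegative series; or (ii) appeal to Lemma~\ref{le2.3}(i) to note $\rho$ is continuous up to the boundary and Abel's theorem to pass limits through the generating functions $B_r$, $\Phi^{(r)}_{\textbf e_1}$. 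Either way the conclusion is $\tilde\lambda_{\mathcal C}$-style contradiction $-v_1(\textbf 1)=0$ or $B_r(\textbf q_*)\ge 0$ for all $r$ contradicting $\lambda_*>0$; if instead $\lambda_*=0$ then $\textbf q_*=\textbf 1$ by Theorem~\ref{th3.1}(iii) and part (a) of Theorem~\ref{th3.2} already gives $\lambda_{\mathcal C}=0=\lambda_*$. Combining the two cases yields $\lambda_{\mathcal C}\le\lambda_*$, and with Theorem~\ref{th3.2} we conclude $\lambda_{\mathcal C}=\lambda_*$.
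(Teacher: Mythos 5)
Your reduction is the same as the paper's: by Theorem~\ref{th3.2} everything hinges on the case $\lambda_*<-\rho(\textbf{q}_*)$, which by Theorem~\ref{th3.1}(ii) can only occur when $\textbf{q}_*$ sits on the boundary $\prod_r[0,\theta_r]$. But your treatment of that case has a genuine gap, and it is precisely the step the paper avoids by switching to an entirely different technique. Your central mechanism --- differentiate the resolvent identity in $x_s$ and contract with the Perron eigenvector $\textbf{v}(\textbf{x})$ of $\textbf{B}'(\textbf{x})$ so that the $\sum_r B_{rs}\,\Phi^{(r)}$ terms ``are killed'' --- does not work when $\rho(\textbf{q}(\lambda))=-\lambda\neq 0$: contracting gives $\sum_s v_s\sum_r B_{rs}\Phi^{(r)}=\rho\sum_r v_r\Phi^{(r)}=-\lambda\sum_r v_r\Phi^{(r)}$, which does not vanish, and it cannot cancel against the term $-\lambda\sum_s v_s\,\partial_{x_s}\Phi$ coming from the $-\lambda\Phi$ part of the identity, because $\partial_{x_s}\Phi$ carries the multiplicity factors $j_s$ and is not $\Phi^{(s)}$. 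This is exactly why part (a) of the proof of Theorem~\ref{th3.2} only runs the contraction at $\lambda=\lambda_*=0$, $\rho(\textbf{1})=0$, and why part (b) first performs the $\textbf{q}_*^{\textbf{j}}e^{\lambda_* t}$ Doob transform to reduce to that situation --- a reduction that itself requires $\rho(\textbf{q}_*)=-\lambda_*$, i.e.\ is unavailable in the very case you must handle. Your two proposed workarounds (contracting only an $\Omega^c$-block of $\textbf{B}'$, or invoking continuity of $\rho$ ``up to the boundary'') do not repair this: a sub-block contraction leaves the cross terms $B_{rs}$ with $r\in\Omega$, $s\in\Omega^c$ alive, and Lemma~\ref{le2.3}(i) only gives continuity on the open set $\prod_r[0,\theta_r)$, while $\textbf{B}'(\textbf{q}_*)$ may have divergent entries when $q_{*r}=\theta_r$.

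The paper resolves the boundary case by approximation rather than by a direct analytic argument: it truncates the batch-arrival rates, setting $b^{(r,n)}_{\textbf{j}}=b^{(r)}_{\textbf{j}}\chi_{\{j_r\le n\}}$, so that each $\textbf{B}^{(n)}$ is a polynomial with infinite radius of convergence. For the truncated $q$-matrices $Q^{(n)}$ the solution $\textbf{q}_*^{(n)}$ is automatically interior, Theorem~\ref{th3.1}(ii) gives $\lambda_*^{(n)}=-\rho^{(n)}(\textbf{q}_*^{(n)})$, and Theorem~\ref{th3.2} yields $\lambda^{(n)}_{\mathcal{C}}=\lambda_*^{(n)}$. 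One then checks $\lambda_*^{(n)}\downarrow\lambda_*$ from the monotonicity $\textbf{B}^{(n)}\le\textbf{B}^{(n+1)}\le\textbf{B}$, and uses the iteration scheme for the minimal resolvents to get ${}_np_{\textbf{ij}}(t)\uparrow p_{\textbf{ij}}(t)$, hence $\lambda_*^{(n)}=\lambda^{(n)}_{\mathcal{C}}\ge\lambda_{\mathcal{C}}$ and so $\lambda_*\ge\lambda_{\mathcal{C}}$. None of this approximation machinery appears in your proposal, and without it (or a correct substitute) the upper bound $\lambda_{\mathcal{C}}\le\lambda_*$ is not established.
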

\par
\begin{proof}
By Theorem~\ref{th3.2}, we only need to consider the case that $\lambda_*<-\rho(\emph{\textbf{q}}_*)$. Let $(\phi_{\emph{\textbf{ij}}}(\lambda):\emph{\textbf{i}},\emph{\textbf{j}}\in \mathbf{Z}_+^d)$ denote the Laplace transform of $(p_{\emph{\textbf{ij}}}(t):\emph{\textbf{i}},\emph{\textbf{j}}\in \mathbf{Z}_+^d)$. For any $n\geq 2$, define
\begin{eqnarray*}
q^{(n)}_{\emph{\textbf{i}}\emph{\textbf{j}}}=
\begin{cases}
\sum\limits_{r=1}^db^{(r,n)}_{\emph{\textbf{j}}-\emph{\textbf{i}}
+\emph{\textbf{e}}_r}\chi_{_{\{i_r>0,\emph{\textbf{j}}-\emph{\textbf{i}}
+\emph{\textbf{e}}_r\}}},\ & \text{if}\ \emph{\textbf{i}}\neq \emph{\textbf{0}},\ \emph{\textbf{j}}\in \mathbf{Z}_+^d,\\
0,\ & \text{otherwise},
\end{cases}
\end{eqnarray*}
where
$b^{(r,n)}_{\emph{\textbf{j}}}=
b^{(r)}_{\emph{\textbf{j}}}\chi_{_{\{j_r\leq n\}}}\ (r=1,\cdots,d, n\geq 2)$. It is obvious that $Q^{(n)}=(q^{(n)}_{\emph{\textbf{i}}\emph{\textbf{j}}}:\emph{\textbf{i}},
\emph{\textbf{j}}\in \mathbf{Z}_+^d)$ is a nonconservative generator matrix as in (\ref{eq1.2}) and
$\mathcal{C}$ is still communicating for each $Q^{(n)} (n\geq N_0)$.
Let $(_np_{\emph{\textbf{i}}\emph{\textbf{j}}}(t):\emph{\textbf{i}},
\emph{\textbf{j}}\in \mathbf{Z}_+^d)$ and
$(_n\phi_{\emph{\textbf{i}}\emph{\textbf{j}}}(\lambda):\emph{\textbf{i}},
\emph{\textbf{j}}\in{\bf Z_+})$ be the Feller minimal
$Q^{(n)}$-function and the Feller minimal $Q^{(n)}$-resolvent,
respectively. Define
\begin{eqnarray*}
   B_r^{(n)}(\emph{\textbf{x}})=\sum\limits_{\emph{\textbf{j}}\in \mathbf{Z}_+^d}b^{(r,n)}_{\emph{\textbf{j}}}
   \emph{\textbf{x}}^{\emph{\textbf{j}}},\ \ r=1,\cdots,d,\ n\geq N_0
\end{eqnarray*}
and
\begin{eqnarray*}
   \lambda_*^{(n)}=\sup\{\lambda\geq 0: \emph{\textbf{B}}^{(n)}(\emph{\textbf{x}})+\lambda \emph{\textbf{x}}=\emph{\textbf{0}} \ {\rm has \ a\ root\
   in}\
   [0,\infty)^{d}\},
\end{eqnarray*}
where $\emph{\textbf{B}}^{(n)}(\emph{\textbf{x}})=(B_1^{(n)}(\emph{\textbf{x}}),\cdots,
B_d^{(n)}(\emph{\textbf{x}}))$. Obviously, the
generating function $\emph{\textbf{B}}^{(n)}(\emph{\textbf{x}})$ is well-defined on $[0,\infty)^d$, i.e.,
the convergence radius of $\{B_r^{(n)}(\emph{\textbf{x}}):r=1,\cdots,d\}$ are all infinite. By Theorem~\ref{th3.1}, $\lambda_*^{(n)}= -\rho_0^{(n)}(\emph{\textbf{q}}_*^{(n)})$, where $\emph{\textbf{q}}_*^{(n)}$ is the unique solution of $\emph{\textbf{B}}^{(n)}(\emph{\textbf{x}})+\lambda_*^{(n)} \emph{\textbf{x}}=\emph{\textbf{0}}$ and $\rho_0^{(n)}(\emph{\textbf{q}}_*^{(n)})$ is the maximal eigenvalue of $(B_{rs}^{(n)}(\emph{\textbf{q}}_*^{(n)}):r,s=1\cdots,d)$. Therefore, by Theorem~\ref{th3.2}, the decay parameter of $(_np_{\emph{\textbf{i}}\emph{\textbf{j}}}(t):\emph{\textbf{i}},
\emph{\textbf{j}}\in \mathbf{Z}_+^d)$ for $\mathcal{C}$ is
$\lambda^{(n)}_{\mathcal{C}}=\lambda_*^{(n)}$.
\par
We now prove that
$\lambda_*^{(n)}\downarrow \lambda_*\ (n\uparrow \infty)$. Indeed, by the definition of $\emph{\textbf{B}}^{(n)}(\emph{\textbf{x}})$, we see that
\begin{eqnarray}
\label{eq3.8}
\emph{\textbf{B}}^{(n)}(\emph{\textbf{x}})\leq \emph{\textbf{B}}^{(n+1)}(\emph{\textbf{x}})\leq \emph{\textbf{B}}(\emph{\textbf{x}}),\ \
\ \forall\ \emph{\textbf{x}}\in \prod\limits_{s=1}^d[0,\theta_s],\ n\geq N_0.
\end{eqnarray}
It can be easily proved that
$\lambda_*^{(n)}\geq \lambda_*^{(n+1)}\geq \lambda_*\ (n\geq N_0)$. Therefore,
$\bar{\lambda}:=\lim\limits_{n\rightarrow \infty}\lambda_*^{(n)}\geq \lambda_*$. We further claim that
$\bar{\lambda}=\lambda_*$. Actually, if $\bar{\lambda}>\lambda_*$, then choose $\tilde{\lambda}\in (\lambda_*,\bar{\lambda})$. By Lemma~\ref{le3.1} and (\ref{eq3.8}), there exists an unique $\tilde{\emph{\textbf{x}}}^{(n)}\in \prod\limits_{s=1}^d[0,q_{*s}]$ such that
$\emph{\textbf{B}}^{(n)}(\tilde{\emph{\textbf{x}}}^{(n)})+\tilde{\lambda}
\tilde{\emph{\textbf{x}}}^{(n)}=\emph{\textbf{0}}$. By the definition of $\emph{\textbf{B}}^{(n)}(\emph{\textbf{x}})$ and the continuity of $\emph{\textbf{B}}(\emph{\textbf{x}})$, there exists $\tilde{\emph{\textbf{x}}}\in \prod\limits_{s=1}^d[0,q_{*s}]$ such that
$\emph{\textbf{B}}(\tilde{\emph{\textbf{x}}})+\tilde{\lambda}
\tilde{\emph{\textbf{x}}}=\emph{\textbf{0}}$, which contradicts the definition of $\lambda_*$. Hence, $\bar{\lambda}=\lambda_*$.
\par
On the other hand, it is well known that
$(\phi_{\emph{\textbf{ij}}}(\lambda):\emph{\textbf{i}},\emph{\textbf{j}}\in \mathbf{Z}_+^d)$ and $(_n\phi_{\emph{\textbf{ij}}}(\lambda):\emph{\textbf{i}},\emph{\textbf{j}}\in \mathbf{Z}_+^d)$ are the minimal nonnegative
solution of the Kolmogorov backward equations
\begin{eqnarray*}
   \phi_{\emph{\textbf{ij}}}(\lambda)=\frac{\delta_{\emph{\textbf{ij}}}}
   {\lambda-q_{\emph{\textbf{ii}}}}+\sum_{\emph{\textbf{l}}\neq
   \emph{\textbf{i}}}\frac{q_{\emph{\textbf{il}}}}{\lambda- q_{\emph{\textbf{ii}}}}\cdot \phi_{\emph{\textbf{lj}}}(\lambda),\ \ \
 \emph{\textbf{i}},\emph{\textbf{j}}\in \mathbf{Z}_+^d
\end{eqnarray*}
and
\begin{eqnarray*}
   _n\phi_{\emph{\textbf{ij}}}(\lambda)=\frac{\delta_{\emph{\textbf{ij}}
   }}{\lambda-q^{(n)}_{\emph{\textbf{ii}}}}+\sum_{\emph{\textbf{l}}\neq
   \emph{\textbf{i}}}\frac{q^{(n)}_{\emph{\textbf{il}}}}{\lambda-q^{(n)}
   _{\emph{\textbf{ii}}}}\cdot
   _n\phi_{\emph{\textbf{lj}}}(\lambda),\ \ \emph{\textbf{i}},\emph{\textbf{j}}\in \mathbf{Z}_+^d
\end{eqnarray*}
respectively. Furthermore, all of them can be obtained by the well-known
iteration scheme. Now note that $q^{(n)}_{\emph{\textbf{ii}}}=q_{\emph{\textbf{ii}}}\ (n\geq \tilde{N}_0)$ and
$q^{(n)}_{\emph{\textbf{il}}}\uparrow q_{\emph{\textbf{il}}} \ (n\uparrow \infty)$ for all $\emph{\textbf{i}}\neq
\emph{\textbf{l}}$. By considering their iteration schemes, we know that
$_n\phi_{\emph{\textbf{ij}}}(\lambda)\uparrow \phi_{\emph{\textbf{ij}}}(\lambda)$ as $n\uparrow
\infty$ and thus for their corresponding transition functions we
also have $_np_{\emph{\textbf{ij}}}(t)\uparrow p_{\emph{\textbf{ij}}}(t)$ as $n\uparrow \infty$. Therefore, $\lambda^{(n)}_*=\lambda^{(n)}_{\mathcal{C}}\geq \lambda_{\mathcal{C}}$ which implying $\lambda_*\geq \lambda_{\mathcal{C}}$. Hence, by Theorem~\ref{th3.2}, $\lambda_{\mathcal{C}}=\lambda_*$. The proof
is complete.
\hfill$\Box$
\end{proof}
\par
By Lemma~\ref{le3.1} and Theorems~\ref{th3.2}-\ref{th3.3}, we know that the decay parameter $\lambda_{\mathcal{C}}$ equals the largest $\lambda$ such that $\emph{\textbf{B}}(\emph{\textbf{x}})+\lambda \emph{\textbf{x}}=\emph{\textbf{0}}$ has a solution in $[0,\infty)^d$. We now consider how to obtain $\lambda_{\mathcal{C}}$. To this end, for any $k\in \{1,2,\cdots,d\}$, denote
\begin{eqnarray}\label{eq3.9}
\lambda^*_k:=\sup\{\frac{B_k(\emph{\textbf{x}})}{-x_k}:
\emph{\textbf{x}}\in [0,\infty)^d\}\ \ \text{conditioned\ on}\ x_kB_r(\emph{\textbf{x}})=x_rB_k(\emph{\textbf{x}}),\ r\neq k.
\end{eqnarray}
Since $\frac{B_k(\emph{\textbf{x}})}{-x_k}=(\lambda_k+\mu_k)-\Lambda_r(x_k)
-\frac{\Gamma_r(\emph{\textbf{x}})}{x_k}$, we know that the supremum in (\ref{eq3.9})can be achieved and hence we can write $\lambda^*_k:=\max\{\frac{B_k(\emph{\textbf{x}})}{-x_k}:
\emph{\textbf{x}}\in [0,\infty)^d\}$ in the following.
\par
\begin{theorem}\label{th3.4}
$\lambda_{\mathcal{C}}=\lambda_*=\min\{\lambda^*_1,\cdots,\lambda^*_d\}$.
\end{theorem}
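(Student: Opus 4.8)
The plan is as follows. Theorems~\ref{th3.2}--\ref{th3.3} already give $\lambda_{\mathcal{C}}=\lambda_*$, and by Lemma~\ref{le3.1} together with the definition~(\ref{eq3.2}) of $\lambda_*$ (for $\lambda>\lambda_*$ even the inequality~(\ref{eq3.1}) has no solution, while for $\lambda=\lambda_*$ equation~(\ref{eq3.3}) does), the number $\lambda_*$ is precisely the largest $\lambda$ for which $\textbf{B}(\textbf{x})+\lambda\textbf{x}=\textbf{0}$ admits a solution in $[0,\infty)^d$. Hence it suffices to prove $\lambda_*=\min\{\lambda^*_1,\cdots,\lambda^*_d\}$, which I would do by establishing the two inequalities separately.

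For $\lambda_*\le\min_k\lambda^*_k$: by Lemma~\ref{le3.1}(ii) the vector $\textbf{q}_*=(q_{*1},\cdots,q_{*d})\in\prod_{r=1}^{d}(0,\theta_r]$ satisfies $B_r(\textbf{q}_*)+\lambda_*q_{*r}=0$ for every $r$, so all ratios $B_r(\textbf{q}_*)/q_{*r}$ coincide (they equal $-\lambda_*$). In particular $q_{*k}B_r(\textbf{q}_*)=q_{*r}B_k(\textbf{q}_*)$ for all $r\ne k$, i.e. $\textbf{q}_*$ is feasible in the constrained problem~(\ref{eq3.9}) defining each $\lambda^*_k$, and at $\textbf{q}_*$ one has $B_k(\textbf{q}_*)/(-q_{*k})=\lambda_*$. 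Therefore $\lambda^*_k\ge\lambda_*$ for every $k$, hence $\min_k\lambda^*_k\ge\lambda_*$.

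For $\lambda_*\ge\min_k\lambda^*_k$: pick $k_0$ with $\lambda^*_{k_0}=\min_k\lambda^*_k$. As noted before the theorem the supremum in~(\ref{eq3.9}) is attained, say at $\textbf{x}^*\in[0,\infty)^d$; moreover $B_{k_0}(\textbf{x})/(-x_{k_0})=(\lambda_{k_0}+\mu_{k_0})-\Lambda_{k_0}(x_{k_0})-\Gamma_{k_0}(\textbf{x})/x_{k_0}\to-\infty$ as $x_{k_0}\downarrow0$, so $x^*_{k_0}>0$. Using the feasibility constraint $x^*_{k_0}B_r(\textbf{x}^*)=x^*_rB_{k_0}(\textbf{x}^*)$ for $r\ne k_0$ and $x^*_{k_0}>0$: if $x^*_r>0$ then $B_r(\textbf{x}^*)/x^*_r=B_{k_0}(\textbf{x}^*)/x^*_{k_0}=-\lambda^*_{k_0}$, while if $x^*_r=0$ then $B_r(\textbf{x}^*)=0$; in both cases $B_r(\textbf{x}^*)+\lambda^*_{k_0}x^*_r=0$, and the identity for $r=k_0$ is the definition of $\textbf{x}^*$. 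Thus $\textbf{B}(\textbf{x})+\lambda^*_{k_0}\textbf{x}=\textbf{0}$ has the solution $\textbf{x}^*\in[0,\infty)^d$, so $\lambda^*_{k_0}\le\lambda_*$ by the maximality characterization of $\lambda_*$ recalled above; hence $\min_k\lambda^*_k\le\lambda_*$. Combining the two inequalities yields $\lambda_*=\min\{\lambda^*_1,\cdots,\lambda^*_d\}$, and with $\lambda_{\mathcal{C}}=\lambda_*$ the theorem follows.

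I expect the main obstacle to be the technical input underlying the constrained maximum in~(\ref{eq3.9}): confirming that the supremum is genuinely attained — coercivity of $B_{k_0}(\textbf{x})/(-x_{k_0})$ as $x_{k_0}\downarrow0$ and as $\textbf{x}$ approaches the boundary of the region where the generating functions converge, using $\gamma_{k_0 0}>0$ and the irreducibility of $\Gamma$ — and that the maximizer satisfies $x^*_{k_0}>0$, so that the feasibility constraint and the passage to $B_r(\textbf{x}^*)+\lambda^*_{k_0}x^*_r=0$ remain legitimate even when some coordinates $x^*_r$ vanish. Everything else is bookkeeping with the already-established Lemma~\ref{le3.1} and Theorems~\ref{th3.2}--\ref{th3.3}.
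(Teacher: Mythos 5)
Your argument coincides with the paper's own proof: both directions use exactly the same observations, namely that $\textbf{q}_*$ from Lemma~\ref{le3.1} is feasible for each constrained problem in (\ref{eq3.9}) with value $\lambda_*$, and conversely that a maximizer of the smallest $\lambda^*_k$ solves $\textbf{B}(\textbf{x})+\lambda^*_k\textbf{x}=\textbf{0}$ and is therefore dominated by $\lambda_*$. Your extra care about attainment of the supremum and about coordinates of the maximizer that vanish only tightens steps the paper asserts without comment, so the proposal is correct and essentially identical in approach.
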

\par
\begin{proof}
By Lemma~\ref{le3.1}, $\emph{\textbf{B}}(\emph{\textbf{x}})+\lambda_* \emph{\textbf{x}}=\emph{\textbf{0}}$ has a solution $\emph{\textbf{q}}_*=(q_{*1},\cdots,q_{*d})\in \prod\limits_{r=1}^d(0,\theta_r]$. It is easy to see that $q_{*k}B_r(\emph{\textbf{q}}_*)=q_{*r}B_k(\emph{\textbf{q}}_*)$ for any $r,k\in \{1,2,\cdots,d\}$. Therefore, $\lambda_*=\frac{B_k(\emph{\textbf{q}}_*)}{-q_{*k}}\leq \lambda^*_k$ for any $k\in \{1,2,\cdots,d\}$. Hence, $\lambda_{\mathcal{C}}=\lambda_*\leq \min\{\lambda^*_1,\cdots,\lambda^*_d\}$. Conversely, let $\lambda^*_{\tilde{k}}=\min\{\lambda^*_1,\cdots,\lambda^*_d\}$ and $\lambda^*_{\tilde{k}}$ is achieved at $\emph{\textbf{x}}_*=(x_{*1},\cdots,x_{*d})\in (0,\infty)^d$. Then
$\lambda^*_{\tilde{k}}=\frac{B_{\tilde{k}}(\emph{\textbf{x}}_*)}
{-x_{*\tilde{k}}}$ and $x_{\tilde{k}}B_r(\emph{\textbf{x}})=x_rB_{\tilde{k}}(\emph{\textbf{x}})$ for all $r\in \{1,\cdots,d\}$. This implies that $\emph{\textbf{x}}_*$ is a solution of $\emph{\textbf{B}}(\emph{\textbf{x}})+\lambda^*_{\tilde{k}}
\emph{\textbf{x}}=\emph{\textbf{0}}$. Therefore, by the definition of $\lambda_*$, we know that $\min\{\lambda^*_1,\cdots,\lambda^*_d\}=\lambda^*_{\tilde{k}}\leq \lambda_*=\lambda_{\mathcal{C}}$. The proof is complete. \hfill $\Box$
\end{proof}
\par
By Theorem~\ref{th3.4}, we see that the decay parameter $\lambda_{\mathcal{C}}$ can be obtained by finding the extreme value of $\frac{B_k(\emph{\textbf{x}})}{-x_k}$ conditioned on $x_kB_r(\emph{\textbf{x}})=x_rB_k(\emph{\textbf{x}})\ (r\in \{1,\cdots,d\})$ for $k\in \{1,\cdots,d\}$.
\par
\vspace{5mm}
 \setcounter{section}{4}
 \setcounter{equation}{0}
 \setcounter{theorem}{0}
 \setcounter{lemma}{0}
 \setcounter{corollary}{0}
\noindent {\large \bf 4. Transiency and invariant measure }
 \vspace{3mm}
 \par
 In this section, we further consider the transiency and invariant measure of $P(t)$. As in the previous section, let $\emph{\textbf{q}}(\lambda)=(q_1(\lambda),\cdots,q_d(\lambda))$ denote the unique solution of
 \begin{eqnarray*}
 \emph{\textbf{B}}(\emph{\textbf{x}})+\lambda \emph{\textbf{x}}=\emph{\textbf{0}}
 \end{eqnarray*}
 on $\prod\limits_{s=1}^d[0,q_{*s}]$ for $\lambda\in [0,\lambda_*]$.
 The following conclusion is our main result in this section.
  \par
\begin{theorem}
\label{th4.1}\ \ Suppose that $Q$ is the $q$-matrix given in $(\ref{eq1.2})$ and $P(t)=(p_{\textbf{ij}}(t):\textbf{i},\textbf{j}\in \mathbf{Z}_+^d)$ is the $Q$-function. Then for any $\lambda
\in [0,\lambda_{\mathcal{C}}]$ and $\textbf{i}\in \mathcal{C}$,
\begin{eqnarray}
\label{eq4.1}
    \int_0^{\infty}e^{\lambda t}p'_{\textbf{i0}}(t)dt\leq \textbf{q}^{\textbf{i}}(\lambda)
\end{eqnarray}
and
\begin{eqnarray}
\label{eq4.2}
  \int_0^{\infty}e^{\lambda t}p'_{\emph{\textbf{i0}}}(t)dt-\textbf{x}^{\textbf{i}}
  -\lambda \int_0^{\infty}e^{\lambda t}F_{\textbf{i}}(t,\textbf{x})dt=
\sum_{r=1}^dB_r(\textbf{x})\cdot \int_0^{\infty}e^{\lambda t}F^{(r)}_{\textbf{i}}(t,\textbf{x})dt.
\end{eqnarray}
Hence, $P(t)$
is $\lambda_{\mathcal{C}}$-transient.
\end{theorem}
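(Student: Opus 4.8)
The plan is to prove the resolvent identity \eqref{eq4.2} first and extract \eqref{eq4.1} from it, then deduce $\lambda_{\mathcal{C}}$-transience. Starting from the resolvent form \eqref{eq2.3} of Lemma~\ref{le2.1} with parameter $\mu>0$, namely
\begin{eqnarray*}
\mu\phi_{\textbf{i0}}(\mu)+\mu\Phi_{\textbf{i}}(\mu,\textbf{x})-\textbf{x}^{\textbf{i}}=\sum_{r=1}^dB_r(\textbf{x})\Phi^{(r)}_{\textbf{i}}(\mu,\textbf{x}),
\end{eqnarray*}
I would substitute $\mu=\lambda$ is not legitimate directly (the resolvent integrals involve $e^{-\mu t}$, not $e^{\lambda t}$), so instead I would reinterpret: \eqref{eq2.3} holds as an identity between Laplace transforms, and for $\lambda\in[0,\lambda_{\mathcal{C}})$ the Laplace transform $\phi_{\textbf{ij}}(-\lambda)=\int_0^\infty e^{\lambda t}p_{\textbf{ij}}(t)\,dt$ converges by the definition of the decay parameter \eqref{eq1.1}. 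The first step is therefore to justify that \eqref{eq2.3} extends by analytic continuation (or by monotone convergence in the iteration/forward equations) to $\mu=-\lambda$ for $\lambda\in[0,\lambda_{\mathcal{C}})$, giving exactly \eqref{eq4.2} after noting $\int_0^\infty e^{\lambda t}p'_{\textbf{i0}}(t)\,dt=-\lambda\phi_{\textbf{i0}}(-\lambda)$ up to the boundary term $p_{\textbf{i0}}(0)=0$ for $\textbf{i}\in\mathcal{C}$; the case $\lambda=\lambda_{\mathcal{C}}$ then follows by monotone convergence letting $\lambda\uparrow\lambda_{\mathcal{C}}$, using that all quantities are nonnegative and increasing in $\lambda$.

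The second step is to establish the pointwise bound \eqref{eq4.1}. Here I would evaluate the identity \eqref{eq4.2} at $\textbf{x}=\textbf{q}(\lambda)$, the unique solution of $\textbf{B}(\textbf{x})+\lambda\textbf{x}=\textbf{0}$ on $\prod_{r=1}^d[0,q_{*r}]$ guaranteed by Lemma~\ref{le3.1}. Since $B_r(\textbf{q}(\lambda))=-\lambda q_r(\lambda)$ for each $r$, the right-hand side of \eqref{eq4.2} becomes $-\lambda\sum_{r=1}^d q_r(\lambda)\int_0^\infty e^{\lambda t}F^{(r)}_{\textbf{i}}(t,\textbf{q}(\lambda))\,dt$, and because $F^{(r)}_{\textbf{i}}(t,\textbf{x})=\sum_{\textbf{j}\in\mathcal{C}_r^+}p_{\textbf{ij}}(t)\textbf{x}^{\textbf{j}-\textbf{e}_r}$ we have $\sum_{r=1}^d q_r(\lambda)F^{(r)}_{\textbf{i}}(t,\textbf{q}(\lambda))=\sum_{r=1}^d\sum_{\textbf{j}\in\mathcal{C}_r^+}p_{\textbf{ij}}(t)\textbf{q}^{\textbf{j}}(\lambda)$, which is nonnegative. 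Hence \eqref{eq4.2} at $\textbf{x}=\textbf{q}(\lambda)$ reads
\begin{eqnarray*}
\int_0^\infty e^{\lambda t}p'_{\textbf{i0}}(t)\,dt-\textbf{q}^{\textbf{i}}(\lambda)-\lambda\int_0^\infty e^{\lambda t}F_{\textbf{i}}(t,\textbf{q}(\lambda))\,dt=-\lambda\sum_{r=1}^d\int_0^\infty e^{\lambda t}\Big(\sum_{\textbf{j}\in\mathcal{C}_r^+}p_{\textbf{ij}}(t)\textbf{q}^{\textbf{j}}(\lambda)\Big)dt\le 0,
\end{eqnarray*}
and a small bookkeeping argument comparing $F_{\textbf{i}}(t,\textbf{q}(\lambda))=\sum_{\textbf{j}\in\mathcal{C}}p_{\textbf{ij}}(t)\textbf{q}^{\textbf{j}}(\lambda)$ with $\sum_{r\in D(\textbf{j})}$ (each $\textbf{j}\in\mathcal{C}$ is counted once for each nonempty coordinate) shows that the last sum dominates $\int_0^\infty e^{\lambda t}F_{\textbf{i}}(t,\textbf{q}(\lambda))\,dt$, so the two $\lambda$-terms cancel favourably and we are left with $\int_0^\infty e^{\lambda t}p'_{\textbf{i0}}(t)\,dt\le\textbf{q}^{\textbf{i}}(\lambda)$, which is \eqref{eq4.1}.

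The final step is $\lambda_{\mathcal{C}}$-transience. By the standard criterion (Anderson~\cite{And91}), the class $\mathcal{C}$ is $\lambda_{\mathcal{C}}$-transient iff $\int_0^\infty e^{\lambda_{\mathcal{C}} t}p_{\textbf{ij}}(t)\,dt<\infty$, equivalently iff the $\lambda_{\mathcal{C}}$-resolvent $\phi_{\textbf{ij}}(-\lambda_{\mathcal{C}})$ is finite; since $\mathcal{C}$ is not closed (customers exit to $\textbf{0}$ with positive rate because $\gamma_{r0}>0$ for some $r$), it is equivalent to show $\int_0^\infty e^{\lambda_{\mathcal{C}} t}p'_{\textbf{i0}}(t)\,dt<\infty$, i.e. that the mass leaking to $\textbf{0}$ remains finite after the exponential tilt. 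But \eqref{eq4.1} at $\lambda=\lambda_{\mathcal{C}}$ gives precisely $\int_0^\infty e^{\lambda_{\mathcal{C}} t}p'_{\textbf{i0}}(t)\,dt\le\textbf{q}^{\textbf{i}}(\lambda_{\mathcal{C}})=\textbf{q}_*^{\textbf{i}}<\infty$, and then \eqref{eq4.2} at $\lambda_{\mathcal{C}}$ forces $\int_0^\infty e^{\lambda_{\mathcal{C}} t}F_{\textbf{i}}(t,\textbf{q}_*)\,dt<\infty$ as well, whence $\int_0^\infty e^{\lambda_{\mathcal{C}} t}p_{\textbf{ij}}(t)\,dt<\infty$ for every $\textbf{j}\in\mathcal{C}$. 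This is exactly the definition of $\lambda_{\mathcal{C}}$-transience. I expect the main obstacle to be the first step: rigorously justifying that the resolvent identity \eqref{eq2.3}, a priori valid only for $\mu>0$, remains valid with the convergent tilted integrals at $\mu=-\lambda$ for $\lambda$ up to and including $\lambda_{\mathcal{C}}$ — one must be careful that the generating-function manipulations (interchange of the $\textbf{j}$-sum and the time integral, and differentiation in $\textbf{x}$) are licensed, which requires controlling convergence of $\sum_{\textbf{j}}e^{\lambda t}p_{\textbf{ij}}(t)\textbf{x}^{\textbf{j}}$ uniformly on compact subsets of $\prod_r[0,\theta_r)$ and a monotone-convergence argument at the endpoint $\lambda=\lambda_{\mathcal{C}}$.
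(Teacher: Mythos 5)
Your proposal is correct and follows essentially the same route as the paper: establish the tilted resolvent identity \eqref{eq4.2} for $\lambda\in[0,\lambda_{\mathcal{C}})$ from Lemma~\ref{le2.1}, substitute $\textbf{x}=\textbf{q}(\lambda)$ and use $B_r(\textbf{q}(\lambda))=-\lambda q_r(\lambda)$ together with $\sum_r q_r(\lambda)F^{(r)}_{\textbf{i}}\geq F_{\textbf{i}}$ to obtain \eqref{eq4.1}, then let $\lambda\uparrow\lambda_{\mathcal{C}}$ and use $\gamma_{r0}>0$ plus irreducibility to conclude $\lambda_{\mathcal{C}}$-transience. The only difference is presentational: the paper integrates the forward-equation identity \eqref{eq2.2} against $e^{\lambda t}$ directly (Tonelli for nonnegative terms) rather than phrasing the extension of \eqref{eq2.3} to $\mu=-\lambda$ as analytic continuation, and it leaves the counting step $\sum_r q_r(\lambda)F^{(r)}_{\textbf{i}}\geq F_{\textbf{i}}$ implicit where you spell it out.
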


\begin{proof}
For $\lambda\in[0,\lambda_C)$, we know that $\int_0^{\infty}e^{\lambda t}p_{\emph{\textbf{ij}}}(t)dt<\infty$ for all $\emph{\textbf{i}},\emph{\textbf{j}}\in \mathcal{C}$ and hence $\int_0^{\infty}e^{\lambda t}p'_{\emph{\textbf{i0}}}(t)dt<\infty$ for all $\emph{\textbf{i}}\in \mathcal{C}$. By Lemma~\ref{le2.1},

\begin{eqnarray*}
\int_0^{\infty}e^{\lambda t}p'_{\emph{\textbf{i0}}}(t)dt-\emph{\textbf{x}}^{\emph{\textbf{i}}}-\lambda \int_0^{\infty}e^{\lambda t}F_{\emph{\textbf{i}}}(t,\emph{\textbf{x}})dt=
\sum_{r=1}^dB_r(\emph{\textbf{x}})\cdot \int_0^{\infty}e^{\lambda t}F^{(r)}_{\emph{\textbf{i}}}(t,\emph{\textbf{x}})dt,\ \ \emph{\textbf{x}}\in \prod\limits_{s=1}^d[0,q_{*s}],
\end{eqnarray*}
which implies that (\ref{eq4.2}) holds for $\lambda \in [0,\lambda_{\mathcal{C}})$. Let $\emph{\textbf{x}}=\emph{\textbf{q}}(\lambda)$ in the above equality, we get (\ref{eq4.1}). Letting $\lambda\uparrow \lambda_{\mathcal{C}}$ in (\ref{eq4.1}) yields $\int_0^{\infty}e^{\lambda_{\mathcal{C}} t}p'_{\emph{\textbf{i0}}}(t)dt\leq \emph{\textbf{q}}_*^{\emph{\textbf{i}}}$ and hence $\int_0^{\infty}e^{\lambda_{\mathcal{C}} t}p_{\emph{\textbf{ij}}}(t)dt<\infty$ for all $\emph{\textbf{i}},\emph{\textbf{j}}\in \mathcal{C}$. Therefore, $P(t)$
is $\lambda_{\mathcal{C}}$-transient and (\ref{eq4.2}) holds for $\lambda \in [0,\lambda_{\mathcal{C}}]$.
 The proof is complete. \hfill $\Box$
\end{proof}
\par
Now, we turn our attention to the quasi-stationary distribution of
the stopped $M^X/M/1$ queue. We first consider the invariant
measures.
\par
\begin{theorem}
\label{th4.2}\ \ Suppose that $Q$ is the $q$-matrix given in $(\ref{eq1.2})$ and $P(t)=(p_{\textbf{ij}}(t):\textbf{i},\textbf{j}\in \mathbf{Z}_+^d)$ is the $Q$-function. Then for any $\lambda \in [0,\lambda_{\mathcal{C}}]$,
\par
{\rm (i)}\ there exists a $\lambda$-invariant measure $(m_{\textbf{i}}:\textbf{i}\in \mathcal{C})$
for $Q$ on $\mathcal{C}$. Moreover, the generating function of this $\lambda$-invariant measure
$M(\textbf{x})=\sum\limits_{\textbf{i}\in \mathcal{C}}m_{\textbf{i}}\textbf{x}^{\textbf{i}}$ satisfies
\begin{eqnarray}\label{eq4.3}
  \lambda M(\textbf{x})+\sum\limits_{r=1}^dB_r(\textbf{x})M^{(r)}(\textbf{x})
  =\sum\limits_{r=1}^dm_{\textbf{e}_r}b^{(r)}_{\textbf{0}},\ \ \ \textbf{x}\in\prod\limits_{s=1}^d[0,q_{s}),
\end{eqnarray}
\par
\ \ \ \ where $M^{(r)}(\textbf{x})=\sum\limits_{\textbf{i}\in \mathcal{C}^+_r}m_{\textbf{i}}\textbf{x}^{\textbf{i}-\textbf{e}_r}$ and $\{m_{\textbf{e}_r}:r=1,\cdots,d\}$ are positive constants.
\par
{\rm (ii)}\ This measure $(m_{\textbf{i}}:\textbf{i}\in \mathcal{C})$ is also a
$\lambda$-invariant for $P(t)$ on $\mathcal{C}$.
\par
{\rm (iii)}\ This $\lambda$-invariant measure is convergent {\rm
(}i.e., $\sum\limits_{\textbf{i}\in \mathcal{C}}m_{\textbf{i}}<\infty${\rm )} if and only if $\rho(\textbf{1})<0$, $\lambda_{\mathcal{C}}>0$ and $\lambda\in (0,\lambda_{\mathcal{C}}]$.
\end{theorem}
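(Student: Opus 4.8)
The plan is to prove the three parts of Theorem~\ref{th4.2} in the order stated, using Theorem~\ref{th4.1} as the engine for producing the invariant measure. First I would fix $\lambda\in[0,\lambda_{\mathcal C}]$ and, following the standard device, build the candidate $\lambda$-invariant measure from the resolvent: set $m_{\textbf{j}}:=\int_0^{\infty}e^{\lambda t}p_{\textbf{e}_1\textbf{j}}(t)\,dt$ for $\textbf{j}\in\mathcal C$, which is finite for all $\textbf{j}$ by the $\lambda_{\mathcal C}$-transience established in Theorem~\ref{th4.1}. One checks directly from the Kolmogorov backward equations (or from equation (\ref{eq2.3}) after multiplying by $e^{\lambda t}$ and integrating) that $\sum_{\textbf{i}\in\mathcal C}m_{\textbf{i}}q_{\textbf{ij}}=-\lambda m_{\textbf{j}}$ for $\textbf{j}\in\mathcal C$, i.e. $(m_{\textbf{i}})$ is $\lambda$-invariant; positivity follows from irreducibility of $\mathcal C$. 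To obtain (\ref{eq4.3}), I would take $\textbf{x}\in\prod_{s=1}^d[0,q_s)$, multiply the invariance relation by $\textbf{x}^{\textbf{j}}$, sum over $\textbf{j}\in\mathcal C$, and recognize $\sum_{r}B_r(\textbf{x})M^{(r)}(\textbf{x})$ on the right exactly as in the proof of Lemma~\ref{le2.1}; the boundary terms at $\textbf{j}=\textbf{0}$ produce the constants $m_{\textbf{e}_r}b^{(r)}_{\textbf{0}}$ with $m_{\textbf{e}_r}>0$. Convergence of the generating-function manipulation on $\prod_s[0,q_s)$ is where I would be careful: I expect it holds because $(\textbf{q}(\lambda)^{\textbf{j}})$ dominates (up to a constant, via Theorem~\ref{th4.1}) the tail of $m_{\textbf{j}}$, so $M(\textbf{x})<\infty$ for $\textbf{x}<\textbf{q}(\lambda)$.

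For part (ii), I would invoke the general fact (Anderson~\cite{And91}, Theorem 5.4.4 and the surrounding discussion) that for the Feller minimal process a $\lambda$-invariant measure for the $q$-matrix $Q$ on a class $\mathcal C$ is automatically $\lambda$-invariant for $P(t)$, i.e. $\sum_{\textbf{i}\in\mathcal C}m_{\textbf{i}}p_{\textbf{ij}}(t)=e^{-\lambda t}m_{\textbf{j}}$ for all $\textbf{j}\in\mathcal C,\ t\ge0$; this requires only the $\lambda$-transience already proved and the minimality of $P(t)$, so no extra work beyond citing the relevant lemma is needed.

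For part (iii), the statement is an ``if and only if,'' so I would argue both directions. For sufficiency, assume $\rho(\textbf{1})<0$, $\lambda_{\mathcal C}>0$ and $\lambda\in(0,\lambda_{\mathcal C}]$. By Lemma~\ref{le2.2}(i) $\rho(\textbf{1})<0$ forces $\textbf{q}<\textbf{1}$, and (by Lemma~\ref{le3.1} and Theorem~\ref{th3.1}) for $\lambda\le\lambda_{\mathcal C}=\lambda_*$ we get $\textbf{q}(\lambda)$ with $q_r(\lambda)>0$; I would show $\textbf{q}(\lambda)$ can be taken with all coordinates $>1$ when $\lambda>0$ and $\rho(\textbf 1)<0$, or more directly that $M(\textbf x)$ extends continuously to $\textbf x=\textbf 1$, since $\textbf 1$ lies strictly inside the region $\prod_s[0,q_s)$ where (\ref{eq4.3}) holds with finite right-hand side; then $\sum_{\textbf{i}}m_{\textbf{i}}=M(\textbf{1})<\infty$. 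For necessity, I would show that if any of the three conditions fails the sum diverges: if $\lambda=0$ or $\lambda_{\mathcal C}=0$ then (\ref{eq4.3}) at $\textbf x=\textbf 1$ degenerates, and evaluating as in the proof of Theorem~\ref{th3.2}(a) — multiplying by the positive left-eigenvector $\textbf{u}(\textbf{1})$ of $\textbf{B}'(\textbf{1})$ and letting $\textbf{x}\uparrow\textbf{1}$ — forces $M(\textbf{1})=+\infty$; and if $\rho(\textbf{1})\ge0$ then by Lemma~\ref{le2.2} $\textbf{q}=\textbf{1}$, so $\textbf{1}$ is on the boundary of the domain of (\ref{eq4.3}) and again $M(\textbf{x})\to\infty$ as $\textbf{x}\uparrow\textbf{1}$.

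The main obstacle I anticipate is the ``only if'' direction of (iii): one has to rule out convergence in the boundary cases, and the natural tool — pairing (\ref{eq4.3}) with the left Perron eigenvector $\textbf{u}(\textbf{1})$ of $\textbf{B}'(\textbf{1})$ to kill the $\sum_r B_r(\textbf{x})M^{(r)}(\textbf{x})$ term in the limit — needs the monotone convergence / Abel-limit argument to be justified carefully (the $M^{(r)}(\textbf{x})$ may blow up while $B_r(\textbf{x})\to B_r(\textbf{1})$, and one must track the product). I would handle this exactly as in the contradiction step of Theorem~\ref{th3.2}(a): differentiate (\ref{eq4.3}) once in $x_s$, contract against $\textbf{u}(\textbf{1})$, and let $\textbf{x}\uparrow\textbf{1}$ to reach a sign contradiction unless the measure is non-summable.
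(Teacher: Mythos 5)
Your overall architecture (resolvent construction, generating functions, Anderson's transfer criterion) points in the right direction, but there are three concrete gaps. First, in (i) the measure $m_{\textbf{j}}:=\int_0^{\infty}e^{\lambda t}p_{\textbf{e}_1\textbf{j}}(t)\,dt$ is \emph{not} $\lambda$-invariant: the forward equations give $\lambda m_{\textbf{j}}+\sum_{\textbf{i}}m_{\textbf{i}}q_{\textbf{ij}}^{\mathcal{C}}=-\delta_{\textbf{e}_1\textbf{j}}$ (cf.\ the paper's (\ref{eq4.5})), so there is a defect at $\textbf{j}=\textbf{e}_1$ and you only get a subinvariant measure. The paper removes the defect by normalizing by $\int_0^{\infty}e^{\lambda t}p'_{\textbf{i0}}(t)\,dt$, letting the initial state $\textbf{i}$ run to infinity along a subsequence so the $\delta$-term vanishes, and extracting finite limits inductively via (\ref{eq4.6})--(\ref{eq4.7}); some such device is indispensable. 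Second, in (ii) the transfer from $Q$-invariance to $P(t)$-invariance is \emph{not} automatic from minimality and $\lambda$-transience: Anderson's Theorem~5.4.3 requires showing that $\sum_{\textbf{i}}y_{\textbf{i}}q_{\textbf{ij}}=-\mu y_{\textbf{j}}$, $0\le y_{\textbf{j}}\le m_{\textbf{j}}$, has no nontrivial solution for some $\mu<\lambda$. Verifying that hypothesis (separately for $\lambda>0$ with $\mu=0$, and for $\lambda=0$ with $\mu<0$, via the generating-function identities (\ref{eq4.12})--(\ref{eq4.13})) is the substantive content of the paper's proof of (ii), and your proposal skips it entirely.

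Third, part (iii) rests on a reversed reading of Lemma~\ref{le2.2}: $\rho(\textbf{1})\le 0$ gives $\textbf{q}=\textbf{1}$, and $\rho(\textbf{1})>0$ gives $\textbf{q}<\textbf{1}$ --- not the other way around. Consequently, under the sufficiency hypotheses $\textbf{1}$ sits on the \emph{boundary} of $\prod_s[0,q_s)$, not strictly inside it; the correct argument evaluates (\ref{eq4.3}) at $\textbf{x}=\textbf{q}(u)\uparrow\textbf{q}=\textbf{1}$ and uses $\lambda>0$ and the sign of the $-u\sum_r q_r(u)M^{(r)}(\textbf{q}(u))$ term to bound $M(\textbf{1})$ by $\lambda^{-1}\sum_r b^{(r)}_{\textbf{0}}m_{\textbf{e}_r}$. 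Your heuristic ``$\textbf{1}$ on the boundary $\Rightarrow M(\textbf{x})\to\infty$'' is both unproven and inconsistent with your own sufficiency claim (the same boundary situation occurs when $\rho(\textbf{1})<0$ and there $M(\textbf{1})<\infty$). The necessity direction also needs two genuinely different arguments that you do not supply: for $\rho(\textbf{1})=0$ one shows $\textbf{q}(\lambda)>\textbf{1}$ and derives $-\lambda\ge\rho(\textbf{q}(\lambda))>\rho(\textbf{1})=0$, a contradiction; for $\rho(\textbf{1})>0$ one has $\textbf{q}<\textbf{1}$ and comparing (\ref{eq4.3}) at $\textbf{x}=\textbf{q}$ and $\textbf{x}=\textbf{1}$ forces $M(\textbf{q})=M(\textbf{1})$, contradicting strict monotonicity of $M$.
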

\par
\begin{proof}
Let $\lambda\in [0,\lambda_{\mathcal{C}}]$. It follows from Kolmogorov forward equations that
for any $\emph{\textbf{i}}\in \mathcal{C},\
\emph{\textbf{j}}\in \mathbf{Z}_+^d$,
\begin{eqnarray*}
p'_{\emph{\textbf{i}}
\emph{\textbf{j}}}(t)=\sum_{r=1}^d\sum_{\emph{\textbf{l}}\in \mathcal{C}_r^+}p_{\emph{\textbf{i}}
\emph{\textbf{l}}}(t)b^{(r)}_{\emph{\textbf{j}}
-\emph{\textbf{l}}+\emph{\textbf{e}}_r}.
\end{eqnarray*}
Therefore,
\begin{eqnarray}\label{eq4.4}
\int_{0}^{\infty}e^{\lambda t}p'_{\emph{\textbf{i}}
\emph{\textbf{0}}}(t)dt=\sum_{r=1}^d\int_0^{\infty}e^{\lambda t}p_{\emph{\textbf{i}}
\emph{\textbf{e}}_r}(t)dtb^{(r)}_{\emph{\textbf{0}}}
\end{eqnarray}
and for $\emph{\textbf{j}}\in \mathcal{C}$,
\begin{eqnarray}\label{eq4.5}
 \lambda \int_0^{\infty}e^{\lambda t}p_{\emph{\textbf{ij}}}(t)dt+\sum_{r=1}^d\sum_{\emph{\textbf{l}}\in \mathcal{C}_r^+}(\int_0^{\infty}e^{\lambda t}p_{\emph{\textbf{i}}
\emph{\textbf{l}}}(t)dt)b^{(r)}_{\emph{\textbf{j}}
-\emph{\textbf{l}}+\emph{\textbf{e}}_r}=-\delta_{\emph{\textbf{i}}
\emph{\textbf{j}}}.
\end{eqnarray}
Denote $m^{(\emph{\textbf{i}})}_{\emph{\textbf{j}}}=(\int_0^{\infty}e^{\lambda t}p'_{\emph{\textbf{i}}\emph{\textbf{0}}}(t)dt)^{-1}\int_0^{\infty}e^{\lambda t}p_{\emph{\textbf{i}}\emph{\textbf{j}}}(t)dt$ and
$\Delta^{(\emph{\textbf{i}})}_{\emph{\textbf{j}}}=(\int_0^{\infty}e^{\lambda
t}p'_{\emph{\textbf{i}}\emph{\textbf{0}}}(t)dt)^{-1}\delta_{\emph{\textbf{i}}
\emph{\textbf{j}}}$. Then (\ref{eq4.4}) and
(\ref{eq4.5}) can be rewritten as
\begin{eqnarray}\label{eq4.6} \sum\limits_{r=1}^db^{(r)}_{\emph{\textbf{0}}}m^{(\emph{\textbf{i}})}
_{\emph{\textbf{e}}_r}=1
\end{eqnarray}
and for $\emph{\textbf{j}}\in \mathcal{C}$,
\begin{eqnarray}\label{eq4.7}
\lambda m^{(\emph{\textbf{i}})}_{\emph{\textbf{j}}}+\sum\limits_{r=1}^d\sum_{\emph{\textbf{l}}\in \mathcal{C}^+_r}m^{(\emph{\textbf{i}})}_{\emph{\textbf{l}}}b^{(r)}_{\emph{\textbf{j}}
-\emph{\textbf{l}}+\emph{\textbf{e}}_r}=-\delta_{\emph{\textbf{i}}
\emph{\textbf{j}}}.
\end{eqnarray}
Since $m^{(\emph{\textbf{i}})}_{\emph{\textbf{e}}_r}\geq 0$,
it can be easily seen from (\ref{eq4.6}) that there exists a
subsequence $\emph{\textbf{i}}'$ such that $m_{\emph{\textbf{e}}_r}:=\lim_{\emph{\textbf{i}}'\rightarrow \infty}m^{(\emph{\textbf{i}}')}_{\emph{\textbf{e}}_r}<\infty$ for $b^{(r)}_{\emph{\textbf{0}}}>0$. As for $b^{(r)}_{\emph{\textbf{0}}}=0$, we may take $m_{\emph{\textbf{e}}_r}=1$. Then
\begin{eqnarray}
\label{eq4.8} \sum\limits_{r=1}^db^{(r)}_{\emph{\textbf{0}}}m
_{\emph{\textbf{e}}_r}=1.
\end{eqnarray}
For any $s=1,\cdots,d$, consider (\ref{eq4.7}) with $\emph{\textbf{j}}=\emph{\textbf{e}}_s$ and note
that (\ref{eq4.7}) also holds for $\emph{\textbf{i}}=\emph{\textbf{i}}'$, we can see
that there exist $0\leq m_{\emph{\textbf{e}}_{r}+\emph{\textbf{e}}_s}<\infty\ (r=1,\cdots,d)$ satisfying
\begin{eqnarray*}
\lambda m_{\emph{\textbf{e}}_s}
+\sum\limits_{r\neq s}m_{\emph{\textbf{e}}_r}b^{(r)}_{\emph{\textbf{e}}_s}
+\sum\limits_{r=1}^dm_{\emph{\textbf{e}}_s+\emph{\textbf{e}}_r}b^{(r)}
_{\emph{\textbf{0}}}=0.
\end{eqnarray*}
By mathematical
induction principle, we can obtain $(m_{\emph{\textbf{j}}}:\emph{\textbf{j}}\in \mathcal{C})$ which
are nonnegative and finite such that
\begin{eqnarray}\label{eq4.9}
\lambda m_{\emph{\textbf{j}}}+\sum\limits_{r=1}^d\sum_{\emph{\textbf{l}}\in \mathcal{C}^+_r}m_{\emph{\textbf{l}}}b^{(r)}_{\emph{\textbf{j}}
-\emph{\textbf{l}}+\emph{\textbf{e}}_r}=0,\ \ \ \emph{\textbf{j}}\in \mathcal{C}.
\end{eqnarray}
Now we claim that all $m_{\emph{\textbf{j}}}\ (\emph{\textbf{j}}\in \mathcal{C})$ are positive.
Indeed, if $m_{\tilde{\emph{\textbf{j}}}}=0$ for some $\tilde{\emph{\textbf{j}}}\in \mathcal{C}$, then by the
communicating property of $\mathcal{C}$, we know that $m_{\emph{\textbf{j}}}=0$ for all $\emph{\textbf{j}}\in \mathcal{C}$, which contradicts (\ref{eq4.8}). Therefore, $(m_{\emph{\textbf{j}}}:\emph{\textbf{j}}\in \mathcal{C})$ is a
$\lambda$-invariant measure for $Q$ on $\mathcal{C}$. Since $\int_0^{\infty}e^{\lambda_{\mathcal{C}} t}p'_{\emph{\textbf{i}}\emph{\textbf{0}}}(t)dt<\infty\ (\emph{\textbf{i}}\in \mathcal{C})$, by letting
$\lambda\uparrow \lambda_{\mathcal{C}}$ in (\ref{eq4.8})$-$(\ref{eq4.9}) and a similar argument as above, we get a $\lambda_{\mathcal{C}}$-invariant measure
for $Q$ on $\mathcal{C}$.
\par
Since $\lambda <\min\{-b^{(r)}_{\emph{\textbf{e}}_r}:r=1,\cdots,d\}$, multiplying
$\emph{\textbf{x}}^{\emph{\textbf{j}}}$ on both sides of (\ref{eq4.9}) and summing over
$\emph{\textbf{j}}\in \mathcal{C}$ yield that
\begin{eqnarray}
\label{eq4.10} \lambda M(\emph{\textbf{x}})+\sum\limits_{r=1}^dB_r(\emph{\textbf{x}})M^{(r)}(
\emph{\textbf{x}})
  =\sum\limits_{r=1}^dm_{\emph{\textbf{e}}_r}b^{(r)}_{\emph{\textbf{0}}}
\end{eqnarray}
for $\emph{\textbf{x}}$ in some neighbour-hood of $\emph{\textbf{0}}$. Note that
$\emph{\textbf{B}}(\emph{\textbf{x}})\neq \emph{\textbf{0}}$ for $\emph{\textbf{x}}\in \prod\limits_{s=1}^d[0,q_s)$, it is easily
seen that (\ref{eq4.10}) holds for $\emph{\textbf{x}}\in \prod\limits_{s=1}^d[0,q_s)$.
(i) is proved.
\par
In order to prove (ii), by Theorem~5.4.3 in
Anderson~\cite{And91}, we only need to prove that the equations
\begin{eqnarray}
\label{eq4.11}
\sum_{\emph{\textbf{i}}\in \mathcal{C}}y_{\emph{\textbf{i}}}q_{\emph{\textbf{ij}}}=-\mu y_{\emph{\textbf{j}}},\
\ 0\leq y_{\emph{\textbf{j}}}\leq m_{\emph{\textbf{j}}}, \ \emph{\textbf{j}}\in \mathcal{C}
\end{eqnarray}
have no nontrivial solution for some $\mu <\lambda$.
\par
For any $u<0$, since $\emph{\textbf{B}}(\emph{\textbf{q}})+u\emph{\textbf{q}}
=u\emph{\textbf{q}}<0$,
we know that
$\emph{\textbf{B}}(\emph{\textbf{x}})+u\emph{\textbf{x}}
=\emph{\textbf{0}}$ has exactly one solution $\emph{\textbf{q}}(u)\in \prod\limits_{s=1}^d[0,q_s)$ and $\emph{\textbf{q}}(u)\uparrow \emph{\textbf{q}}$ as $u\uparrow 0$.
\par
If $\lambda>0$, then let $\emph{\textbf{x}}=\emph{\textbf{q}}(u)$ in (\ref{eq4.10}) and let $u\uparrow 0$, we see that $M(\emph{\textbf{q}})=\lambda^{-1}
\sum\limits_{r=1}^dm_{\emph{\textbf{e}}_r}b^{(r)}_{\emph{\textbf{0}}}<\infty$. Suppose that
$(y_{\emph{\textbf{j}}}:\emph{\textbf{j}}\in \mathcal{C})$ is a nontrivial solution of (\ref{eq4.11}) with $\mu =0$. A similar argument as above yields
that all $y_{\emph{\textbf{j}}}$'s are positive. Then it follows from
(\ref{eq4.10}) that for any $\emph{\textbf{x}}\in \prod\limits_{s=1}^d[0,q_s]$,
\begin{eqnarray}
\label{eq4.12}
\sum\limits_{r=1}^dB_r(\emph{\textbf{x}})Y^{(r)}(
\emph{\textbf{x}})
  =\sum\limits_{r=1}^dy_{\emph{\textbf{e}}_r}b^{(r)}_{\emph{\textbf{0}}},
\end{eqnarray}
where $Y^{(r)}(\emph{\textbf{x}})=\sum\limits_{\emph{\textbf{j}}\in \mathcal{C}^+_r}y_{\emph{\textbf{j}}}\emph{\textbf{x}}
^{\emph{\textbf{j}}
-\emph{\textbf{e}}_r}$. This contradicts $y_{\emph{\textbf{j}}}\leq m_{\emph{\textbf{j}}}$.
\par
If $\lambda=0$. Suppose that $(y_{\emph{\textbf{j}}}:\emph{\textbf{j}}\in \mathcal{C})$ is a
nontrivial solution of (\ref{eq4.11}) with $\mu <0$. Similarly, we
have that all $y_{\emph{\textbf{j}}}$'s are positive and that for any $\emph{\textbf{x}}\in \prod\limits_{s=1}^d[0,q_s)$,
\begin{eqnarray}\label{eq4.13}
\mu Y(\emph{\textbf{x}})+\sum\limits_{r=1}^dB_r
(\emph{\textbf{x}})Y^{(r)}(
\emph{\textbf{x}})
  =\sum\limits_{r=1}^dy_{\emph{\textbf{e}}_r}b^{(r)}_{\emph{\textbf{0}}},
\end{eqnarray}
where $Y(\emph{\textbf{x}})=\sum\limits_{\emph{\textbf{j}}\in \mathcal{C}}y_{\emph{\textbf{j}}}\emph{\textbf{x}}^{\emph{\textbf{j}}}$, $Y^{(r)}(\emph{\textbf{x}})=\sum\limits_{\emph{\textbf{j}}\in \mathcal{C}^+_r}y_{\emph{\textbf{j}}}\emph{\textbf{x}}^{\emph{\textbf{j}}
-\emph{\textbf{e}}_r}$. Let $\emph{\textbf{x}}=\emph{\textbf{q}}(u)$ in the above equality and note that $y_{\emph{\textbf{j}}}\leq m_{\emph{\textbf{j}}}\ (\emph{\textbf{j}}\in \mathcal{C})$, by (\ref{eq4.10})
with $\lambda=0$ we know that
\begin{eqnarray*}
-\mu Y(\emph{\textbf{q}})\leq
\sum\limits_{r=1}^db^{(r)}_{\emph{\textbf{0}}}(m_{\emph{\textbf{e}}_r}
-y_{\emph{\textbf{e}}_r})<\infty,
\end{eqnarray*}
which contradicts (\ref{eq4.13}). (ii) is proved.
\par
Finally, suppose that $\rho(\emph{\textbf{1}})<0$, $\lambda_{\mathcal{C}}>0$ and $\lambda\in (0,\lambda_{\mathcal{C}}]$. Then $\emph{\textbf{q}}=\emph{\textbf{1}}$. Let $\emph{\textbf{x}}=\emph{\textbf{q}}(u)$ in (\ref{eq4.3}), we get
\begin{eqnarray*}
  \lambda M(\emph{\textbf{q}}(u))-u\sum\limits_{r=1}^dq_r(u)M^{(r)}
  (\emph{\textbf{q}}(u))
  =\sum\limits_{r=1}^db^{(r)}_{\emph{\textbf{0}}}m_{\emph{\textbf{e}}_r}.
\end{eqnarray*}
Let $u\uparrow 0$, we see that
$ M(\emph{\textbf{1}})\leq \lambda^{-1} \sum\limits_{r=1}^db^{(r)}_{\emph{\textbf{0}}}m_{\emph{\textbf{e}}_r}<\infty$.
\par
Conversely, suppose that $M(\emph{\textbf{1}})<\infty$. If $\lambda=0$, then by (\ref{eq4.3}) with $\emph{\textbf{x}}=\emph{\textbf{q}}(u)$, we have
\begin{eqnarray*}
\sum\limits_{r=1}^dM^{(r)}
  (\emph{\textbf{1}})=\lim\limits_{u\uparrow 0} \sum\limits_{r=1}^dq_r(u)M^{(r)}
  (\emph{\textbf{q}}(u))
  =-\lim\limits_{u\uparrow 0}u^{-1}\sum\limits_{r=1}^db^{(r)}_{\emph{\textbf{0}}}
  m_{\emph{\textbf{e}}_r}=\infty.
\end{eqnarray*}
Thus, $\lambda>0$ and hence $\lambda_{\mathcal{C}}>0$. By Lemma~\ref{le3.1}, for any $\lambda\in (0,\lambda_{\mathcal{C}}]$, $\emph{\textbf{B}}(\emph{\textbf{x}})+\lambda \emph{\textbf{x}}=\emph{\textbf{0}}$ has an unique solution $\emph{\textbf{q}}(\lambda)\in \prod\limits_{s=1}^d[0,q_{*s}]$. If $\rho(\emph{\textbf{1}})=0$, then we claim that $\emph{\textbf{q}}(\lambda)\in \prod\limits_{s=1}^d(1,q_{*s}]$. Indeed, if $H=\{r:q_r(\lambda)\leq 1\}\neq \emptyset$, then
\begin{eqnarray*}
B_r(\tilde{\emph{\textbf{u}}})\leq B_r(\emph{\textbf{q}}(\lambda))=-\lambda q_r(\lambda)<0,\ \ r\in H
\end{eqnarray*}
and
\begin{eqnarray*}
B_r(\tilde{\emph{\textbf{u}}})\leq B_r(\emph{\textbf{1}})=0,\ \ r\in \{1,\cdots,d\}\setminus H,
\end{eqnarray*}
where $\tilde{\emph{\textbf{u}}}=(\tilde{u}_k:k=1,\cdots,d)$ with $\tilde{u}_r=q_r(\lambda)\ (r\in H)$ and $\tilde{u}_r=1\ (r\in \{1,\cdots,d\}\setminus H)$. Therefore, there exists $\tilde{\emph{\textbf{x}}}\in \prod\limits_{s=1}^d[0,\tilde{u}_s]\subset [0,1]^d$ such that $\emph{\textbf{B}}(\tilde{\emph{\textbf{x}}})=0$. Since $\rho(\emph{\textbf{1}})=0$, we have $\tilde{\emph{\textbf{x}}}=\tilde{\emph{\textbf{u}}}=\emph{\textbf{1}}$ which contradicts $B_r(\tilde{\emph{\textbf{u}}})\leq -\lambda q_r(\lambda)<0 (r\in H)$. Therefore, $\emph{\textbf{q}}(\lambda)\in \prod\limits_{s=1}^d(1,q_{*s}]$. However, by Theorem~\ref{th3.1} and Lemma~\ref{le2.3}, $-\lambda\geq \rho(\emph{\textbf{q}}(\lambda))>\rho(\emph{\textbf{1}})=0$. This is a contradiction. If $\rho(\emph{\textbf{1}})>0$, then $\emph{\textbf{q}}\in [0,1)^d$. Since $M(\emph{\textbf{1}})<\infty$, we know that (\ref{eq4.3}) holds for $\emph{\textbf{x}}\in [0,1]^d$. Let $\emph{\textbf{x}}=\emph{\textbf{q}}$ and $\emph{\textbf{1}}$ in (\ref{eq4.3}), we get $\lambda M(\emph{\textbf{q}})=\sum\limits_{r=1}^dm_{\emph{\textbf{e}}_r}b^{(r)}
_{\emph{\textbf{0}}}=\lambda M(\emph{\textbf{1}})$, which is a contradiction. Therefore, $\rho(\emph{\textbf{1}})<0$. The proof is complete.\hfill $\Box$
\end{proof}

\vspace{5mm}
\setcounter{section}{5}
\setcounter{equation}{0}
\setcounter{theorem}{0}
\setcounter{lemma}{0}
\setcounter{corollary}{0}
\setcounter{remark}{0}
\noindent {\large \bf 5. An example}
\vspace{3mm}
\par
In this section, we will illustrate the practical implications of the above concepts (i.e., the decay parameter, invariant measure, and quasi-stationary distribution) by constructing a concrete example in auto quick repair service network.
\par
\begin{example}\label{exm5.1}
Consider an auto quick repair service network, consisting of two stations: downtown quick repair station (Station $A$), and suburban quick repair station (Station $B$). The customer flow rules of this network have the following characteristics:
\par
(1)\ {\bf Rules for external customer entry} ({\bf i.e., external load access rules})
\par
External customers of each station can be viewed as ``new vehicle repair needs" (such as citizens driving to the station for repairs or enterprise vehicles being sent for repair for the first time). They can only enter when the station has ``vehicles under repair" (i.e., there are customers). The entry rules are as follows.
\par
$\bullet$\
If Station $A$ currently has vehicles under repair (non-empty state), new external customers (e.g., private cars of downtown residents) are allowed to enter this station for repairs, the arrival rates are $\lambda_{11}=4.4$. The total arrival rate of Station $A$ is $\lambda_1=\lambda_{11}=4.4$.
\par
$\bullet$\
If Station $B$ currently has vehicles under repair (non-empty state), new external customers (e.g., private cars of downtown residents) are allowed to enter this station for repairs, the arrival rates are $\lambda_{21}=5.625$. The total arrival rate of Station $B$ is $\lambda_2=\lambda_{21}=5.625$.
\par
$\bullet$\ If a station has no vehicles under repair at present (empty state), the external entrance will be temporarily closed and no new external repair needs will be accepted (to avoid insufficient preparation of tools/personnel due to sudden repair acceptance, which would affect repair quality).
\par
(2)\ {\bf Rules of repair} ({\bf i.e., repair rate})
\par
The repair rates of Stations A and B are $\mu_1=2$ and $\mu_2=3.125$, respectively.
\par
(3)\ {\bf Rules for internal customer transfer} ({\bf i.e., load transfer})
\par
Internal customers of the system are ``vehicles under repair", which can be freely transferred between stations due to changes in repair needs, without any restriction. The transfer rates are as follows.
\par
$\bullet$\ When a vehicle at Station $A$ finishes its repair, it leaves the system with probability $\gamma_{10}=0.5$. Moreover, it transfers to Station $B$ with probability $\gamma_{12}=0.5$.
\par
$\bullet$\ When a vehicle at Station $B$ finishes its repair, it leaves the system with probability $\gamma_{20}=0.68$. Moreover, it transfers to Station $A$ with probability $\gamma_{21}=0.32$.
\par
It is easy to see that $B_1(x,y)=\frac{22}{5}x^2-\frac{32}{5}x+y+1$ and $B_2(x,y)=\frac{45}{8}y^2-\frac{35}{4}y+x+\frac{17}{8}$.
\par
By Lemma~\ref{le3.1} and Theorem \ref{th3.3}, the decay parameter $\lambda_{\mathcal{C}}$ is the largest $\lambda$ such that
\begin{eqnarray*}
\begin{cases}
\frac{22}{5}x^2-\frac{32}{5}x+y+1=-\lambda x\\
\frac{45}{8}y^2-\frac{35}{4}y+x+\frac{17}{8}=-\lambda y
\end{cases}
\end{eqnarray*}
has a nonnegative solution.
\par
By the expressions of $B_1(x,y)$ and $B_2(x,y)$, $B_1(x,y)=-\lambda x$ and $B_2(x,y)=-\lambda y$ can be rewritten as $\mathcal{C}_1:\ y=-\frac{22}{5}(x-\frac{32-5\lambda}{44})^2+
\frac{(32-5\lambda)^2}{440}-1$ and $\mathcal{C}_2:\ x=-\frac{45}{8}(y-\frac{35-4\lambda}{45})^2+
\frac{(35-4\lambda)^2}{360}-\frac{17}{8}$ respectively. We see that $\mathcal{C}_1$ is a downward-opening parabola with vertex $(\frac{32-5\lambda}{44},\frac{(32-5\lambda)^2}{440}-1)$ and $\mathcal{C}_2$ is a left-opening parabola with vertex $(\frac{(35-4\lambda)^2}{360}-\frac{17}{8},\frac{35-4\lambda}{45}
)$, where $\lambda\in [0,\frac{32}{5})$ is the index parameter. As $\lambda$ increases in $[0,\frac{32}{5})$, the two parabolas maintain their shape, and their vertices move in the left-downward direction. Let $f(\lambda)=\frac{32-5\lambda}{44}
-\frac{(35-4\lambda)^2}{360}+\frac{17}{8}$ and $g(\lambda)=\frac{(32-5\lambda)^2}{440}-1-\frac{35-4\lambda}{45}
$. Then $f(0)=-\frac{109}{198}, f(\frac{32}{5})=\frac{4229}{2250}>0$ and $g(0)=\frac{272}{495}>0,  g(\frac{32}{5})=-\frac{272}{225}<0$. Furthermore, $f'(\lambda)=\frac{263}{396}-\frac{4\lambda}{45}$ and $g'(\lambda)=\frac{5\lambda-32}{44}+\frac{4}{45}$.
Since the two parabolas intersect when $\lambda=0$ and are disjoint when $\lambda=\frac{32}{5}$, we know that there exists exactly one $\lambda_{max}\in [0,\frac{32}{5})$ such that the two parabolas intersect when $\lambda<\lambda_{max}$ and are disjoint when $\lambda>\lambda_{max}$, while they are tangent when $\lambda=\lambda_{max}$. It can be proved that $\lambda_{max}=1$ and the tangent point is $(0.5,0.6)$. Therefore, $\lambda_{\mathcal{C}}=1$.
\par
For the $\lambda_{\mathcal{C}}$-invariant measure $(m_{ij}:(i,j)\in \mathcal{C})$, by Theorem~\ref{th4.2}, the generating function $M(x,y):=\sum\limits_{i=1}^{\infty}\sum\limits_{j=0}^
{\infty}m_{ij}x^iy^j+\sum\limits_{j=1}^{\infty}m_{0j}y^j$ satisfies
\begin{eqnarray*}
&&M(x,y)+(\frac{22}{5}x^2-\frac{32}{5}x+y+1)
\frac{M(x,y)-M(0,y)}{x}\\
&&+(\frac{45}{8}y^2-\frac{35}{4}y+x+\frac{17}{8})\frac{M(x,y)-M(x,0)}{y}
=m_{10}+\frac{17}{8}m_{01}
\end{eqnarray*}
\begin{eqnarray*}
f(x,y)\cdot M(x,y)-yB_1(x,y)\cdot
M(0,y)-xB_2(x,y)\cdot
M(x,0)
=(m_{10}+\frac{17}{8}m_{01})xy,
\end{eqnarray*}
where $f(x,y):=\frac{22}{5}x^2y+\frac{45}{8}xy^2+x^2+y^2-
\frac{283}{20}xy+\frac{17}{8}x+y$ and $m_{10}, m_{01}$ are positive constants.
\end{example}
\par
\begin{remark}\label{re5.1}
First, the decay parameter measures how fast an auto quick repair service network transitions from operational state to shutdown state. It also helps analyze the decay of system workload, aiding recovery strategy-making. Second, the invariant measure reflects a system's long-term stable distribution under specific conditions, helping predict long-term load distribution across substations and aiding auto quick repair service network planning and expansion.
\end{remark}

\section*{Acknowledgement}
This work is supported by the National Key Research and Development Program of China (2022YFA1004600) and the National Natural Sciences Foundations of China (No. 11931018).
\par
%\section*{Declarations}
%\par
 % We declare that the authors of this paper have no %competing interests as defined by Springer Nature, %or other interests that might be perceived to %influence the results and/or discussion reported in %this paper.
 %\par
 %We also declare that the data used to support the %findings of this study are included within the %article.
\par

\end{document}